\documentclass[11pt]{amsart}
\usepackage[margin=1in]{geometry}
\usepackage{amsmath}
\usepackage{amssymb}
\usepackage{amsthm}
\usepackage{mathrsfs}
\usepackage{enumerate}
\usepackage{tikz}
\usepackage{hyperref}
\usepackage[indent]{parskip}
\newtheorem{thm}{Theorem}[section]
\newtheorem{lem}[thm]{Lemma}
\newtheorem{cor}[thm]{Corollary}
\theoremstyle{definition}
\newtheorem{dfn}[thm]{Definition}

\newtheorem*{prb*}{Problem}

\providecommand{\al}{\alpha}

\providecommand{\de}{\delta}
\providecommand{\De}{\Delta}
\providecommand{\e}{\varepsilon}
\providecommand{\f}{\varphi}
\providecommand{\sig}{\sigma}

\providecommand{\La}{\Lambda}

\providecommand{\emp}{\varnothing}

\providecommand{\F}{\mathbb{F}}
\providecommand{\N}{\mathbb{N}}

\providecommand{\R}{\mathbb{R}}

\providecommand{\cG}{\mathcal{G}}
\providecommand{\cH}{\mathcal{H}}

\providecommand{\cM}{\mathcal{M}}
\providecommand{\cP}{\mathcal{P}}

\DeclareMathOperator{\extr}{ex}

\numberwithin{equation}{section}

\title{Extremal graph theory and point configurations in Ahlfors-David regular sets}
\author{Alex McDonald}
\address{Mathematics Department, Kennesaw State University, Marietta, GA}
\email{amcdon79@kennesaw.edu}

\begin{document}
\begin{abstract}
We study the problem of embedding bipartite graphs in Ahlfors-David regular sets of large dimension using results from extremal graph theory.  Our main theorem states that any graph satisfying a power-improving bound on the extremal number can be found in the distance graph of a sufficiently high-dimensional AD-regular set.  In particular, we show that AD-regular sets of dimension greater than $\frac{d+1}{2}$ must contain even cycles of all lengths if $d\geq 3$, and must contain even cycles of length at least 6 if $d=2$.  This improves the best known threshold for the problem in $d\geq 4$, and yields entirely new results in $d=2,3$, under the extra assumption of AD-regularity.  We also prove analogous results for large subsets of vector spaces over finite fields, which improve the best known exponent for even cycles in all dimensions.
\end{abstract}
\maketitle

\section{Introduction}
An important class of problems of interest in harmonic analysis and geometric measure theory is to determine whether certain finite point configurations must occur in sets of large Hausdorff dimension, and if so, how abundant they must be.  The prototypical example of such a problem is the well-known Falconer distance problem, which asks how large the Hausdorff dimension of a compact set $E\subset \R^d$ ($d\geq 2$) must be to ensure the distance set
\[
\De(E):=\{|x-y|:x,y\in E\}
\]
has positive Lebesgue measure.  This problem was introduced by Falconer \cite{Falconer}, who proved dimension $\frac{d+1}{2}$ is sufficient and conjectured that $\frac{d}{2}$ is the optimal dimensional threshold.  Currently, the best results state that dimension greater than $s_d$ is sufficient, where
\[
s_d=
\begin{cases}
5/4, & d=2, \\[.1in]
\frac{d}{2}+\frac{1}{4}-\frac{1}{8d+4}, & d\geq 3.
\end{cases}
\]
The result in the plane is due to Guth, Iosevich, Ou, and Wang \cite{GIOW}, and the result in higher dimensions is due to Du, Ou, Ren, and Zhang \cite{DORZ}.  

In another direction, one can ask whether large Hausdorff dimension also implies that the distance set contains a non-degenerate interval.  The first result in this direction is due to Mattila and Sj\"{o}lin \cite{MS99}, who proved that $\dim(E)>\frac{d+1}{2}$ implies that $\De(E)$ contains an interval.  This matches the dimensional threshold in Falconer's original paper on the problem, but strengthens the conclusion.  Unlike in the positive-measure version of the problem, there has been no improvement in this threshold.

The problem we will consider in this paper is a generalization of the Falconer distance problem, where instead of trying to improve the threshold, one investigates more complicated patterns than single distances.  It will be useful to express our generalization in the language of graph theory.

\begin{dfn}
\label{distancegraphdefinition}
Let $G$ be a finite graph, let $E\subset \R^d$, and let $t>0$.  The \textbf{$t$-distance graph} of $E$ is the graph $\cG_t(E)$ with vertex set $E$ and adjacency defined by $x\sim y$ if and only if $|x-y|=t$.  The \textbf{$G$-distance set} of $E$ is the set
\[
\De_G(E)=\{t: \cG_t(E) \text{ contains a subgraph isomorphic to } G\}.
\]
\end{dfn}

Thus, $\De(E)=\De_{K_2}(E)\cup \{0\}$; that is, the distance set corresponds to the choice of the complete graph on two vertices, except that by convention we allow $0$ to be in the distance set but not the graph-distance set.  In general, a number $t\in \De_G(E)$ is a distance such that the vertex set of the graph $G$ can be embedded in the plane in such a way that every pair of adjacent vertices is mapped to a pair of points of distance $t$.

There are a number of results on the existence and abundance of distance graphs in sets of large Hausdorff dimension, both positive and negative.  In the positive direction, if $G=T$ is a tree, then Iosevich and Taylor \cite{IT19} proved that $\De_T(E)$ contains an interval whenever $E\subset \R^d$ is a compact set with $\dim E>\frac{d+1}{2}$, matching the Falconer/Mattila-Sj\"{o}lin threshold and generalizing the conclusion further.  Since a tree is simply a connected acyclic graph, the next natural question to ask is whether a similar result holds for a cycle.  We will use the following notation.

\begin{dfn}
For any $n\geq 3$, we denote by $C_n$ the graph consisting of $n$ vertices in a cycle; that is, the vertex set of $C_n$ is $\{1,\dots,n\}$, and the adjacency relation is defined by $1\sim 2\sim \cdots \sim n\sim 1$, as shown in Figure \ref{cycles}.
\end{dfn}

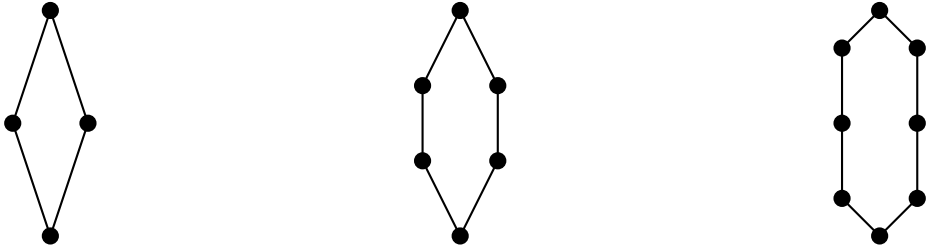
\begin{figure}
\centering
\begin{minipage}{.33\textwidth}
  \centering
\begin{tikzpicture}[scale=1, thick]
  \draw (.5,0)--(1,1.5)--(.5,3)--(0,1.5)--(.5,0);
  \draw [fill] (.5,0) circle [radius=0.1];
  \draw [fill] (1,1.5) circle [radius=0.1];
  \draw [fill] (.5,3) circle [radius=0.1];
  \draw [fill] (0,1.5) circle [radius=0.1];
\end{tikzpicture}
\end{minipage}%
\begin{minipage}{.33\textwidth}
  \centering
\begin{tikzpicture}[scale=1, thick]
 \draw (.5,0)--(1,1)--(1,2)--(.5,3)--(0,2)--(0,1)--(.5,0);
 \draw [fill] (.5,0) circle [radius=0.1];
  \draw [fill] (1,1) circle [radius=0.1];
  \draw [fill] (1,2) circle [radius=0.1];
  \draw [fill] (0,2) circle [radius=0.1];
  \draw [fill] (.5,3) circle [radius=0.1];
  \draw [fill] (0,1) circle [radius=0.1];
\end{tikzpicture}
\end{minipage}
\begin{minipage}{.33\textwidth}
  \centering
\begin{tikzpicture}[scale=1, thick]
  \draw (.5,0)--(1,.5)--(1,2.5)--(.5,3)--(0,2.5)--(0,.5)--(.5,0);
   \draw [fill] (.5,0) circle [radius=0.1];
  \draw [fill] (1,.5) circle [radius=0.1];
  \draw [fill] (1,2.5) circle [radius=0.1];
  \draw [fill] (1,1.5) circle [radius=0.1];
  \draw [fill] (0,2.5) circle [radius=0.1];
  \draw [fill] (0,1.5) circle [radius=0.1];
  \draw [fill] (.5,3) circle [radius=0.1];
  \draw [fill] (0,.5) circle [radius=0.1];
\end{tikzpicture}
\end{minipage}
\caption{The graphs $C_4, C_6, C_8$}
  \label{cycles}
\end{figure}

Greenleaf, Iosevich, and Pramanik \cite{GIP17} proved that if $d\geq 4$ and $E\subset \R^d$ is a compact set with $\dim E>\frac{d+3}{2}$, then for any $k\geq 2$ the set $\De_{C_{2k}}(E)$ contains an interval.  In the case $d=3$, the author, joint with Iosevich, Magyar, and B. McDonald \cite{IMMM25}, proved that if $E\subset \R^3$ satisfies $\dim E> 2.89$ then $\De_{C_4}(E)$ contains an interval.  In the negative direction, a result of Maga \cite{Maga} shows that there exists a set $E\subset \R^2$ such that $\dim E=2$, but such that $\De_{C_3}(E)=\De_{C_4}(E)=\emp$.

In this paper, we will assume not only that the Hausdorff dimension is large, but also that the set satisfies the Ahlfors-David regularity condition, which we define now.

\begin{dfn}
A set $E\subset \R^d$ is \textbf{Ahlfors-David regular} with exponent $s\in (0,d]$, or simply $s$-regular, if there exists a Borel probability measure $\mu$ supported on $E$ such that for every $x\in E$ and $0<r\leq \text{diam}(E)$, we have
\[
\mu(B_r(x))\approx r^s,
\]
where $B_r(x)$ is the ball of radius $r$ centered at the point $x$.  The measure $\mu$ is called an Ahlfors-David regular measure with exponent $s$, or simply an $s$-regular measure.
\end{dfn}
Here and throughout, we will use the notation $X\lesssim Y$ to mean that there exists a constant $C$ for which $X\leq CY$.  Similarly, $X\gtrsim Y$ means $Y\lesssim X$, and $X\approx Y$ means that both $X\lesssim Y$ and $X\gtrsim Y$ hold.  Expressions such as $X\lesssim_{a,b,c} Y$ mean that the implicit constant is allowed to depend on the parameters $a,b,c$.

Any $s$-regular set has Hausdorff dimension $s$, so this parameter may be compared to the dimensional thresholds in the previous results.  Our technique will show that any $s$-regular subset of $\R^d$ with $s>\frac{d+1}{2}$ can be discretized in such a way that a corresponding approximate distance graph has many edges.  This means that we will be able to exhibit approximate versions of a graph $G$ whenever we can show that any finite graph with enough edges must have a copy of $G$.  This leads us to the area of extremal graph theory, and the following definition.

\begin{dfn}
Let $G$ be a finite graph, and let $n\in\N$.  The \textbf{extremal number} $\extr(n,G)$ is the largest number of edges in any graph $H$ on $n$ vertices which does not have a subgraph isomorphic to $G$.
\end{dfn}

Our first main theorem is as follows.

\begin{thm}
\label{ADmain}
Let $E\subset \R^d$ be a compact, Ahlfors-David regular set with exponent $s$, and let $G$ be a graph satisfying
\[
\extr(n,G)\lesssim_{G,\al} n^{2-\al}
\]
for some $\al>0$.  If $s>\max(\frac{d+1}{2},\frac{1}{\al})$, then $\De_G(E)$ has non-empty interior.
\end{thm}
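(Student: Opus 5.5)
Fix $t$ in an open interval, to be found, and a small scale $\de>0$. Discretize $E$ at scale $\de$: choose a maximal $\de$-separated subset $P_\de\subset E$. By $s$-regularity, $|P_\de|\approx \de^{-s}$, and every $\de$-ball centered in $E$ has $\mu$-mass $\approx \de^s$. Let $H_\de$ be the graph on $P_\de$ in which $p\sim q$ when $\bigl||p-q|-t\bigr|\le C\de$. The plan has three steps:
\begin{enumerate}[(i)]
\item show that $H_\de$ has many edges;
\item apply the extremal hypothesis to find a copy of $G$ in $H_\de$;
\item let $\de\to 0$ and use compactness to obtain an exact copy of $G$ at distance $t$.
\end{enumerate}

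\textbf{Edge count.} Step (i) uses the Mattila--Sj\"olin machinery. For $s>\frac{d+1}{2}$, the distance measure $\nu=\De_*(\mu\times\mu)$, defined by $\int f\,d\nu=\iint f(|x-y|)\,d\mu(x)\,d\mu(y)$, has a continuous density $\nu(t)$. This follows from the standard spherical-average estimate $\int |\hat\mu(\xi)|^2|\xi|^{-\frac{d-1}{2}}\,d\xi<\infty$, which holds whenever $s>\frac{d+1}{2}$, since for an $s$-regular measure the energy $I_\gamma(\mu)$ is finite for all $\gamma<s$. Continuity of $\nu$ together with $\int \nu=1$ gives an open interval $I$ on which $\nu(t)\ge c>0$. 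Fix $t\in I$. Then
\[
(\mu\times\mu)\{(x,y): \bigl||x-y|-t\bigr|\le\de\}\approx \de\,\nu(t)\gtrsim \de .
\]
Each pair of $\de$-cells has product mass $\approx \de^{2s}$, so the number of edges of $H_\de$ is
\[
|\mathrm{Edges}(H_\de)|\gtrsim \de^{1-2s}=n^{2-1/s},\qquad n=|P_\de|\approx \de^{-s}.
\]
Getting this lower bound to hold uniformly in $\de$ for every $t$ in a fixed interval is the main obstacle. Continuity of the density, rather than mere $L^2$ bounds, is what gives uniformity. The hypothesis $s>\frac{d+1}{2}$ is exactly what makes this work.

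\textbf{Extremal step.} Since $s>1/\al$, we have $2-1/s>2-\al$. Hence $n^{2-1/s}\gg \extr(n,G)$ for small $\de$, and so $H_\de$ contains a copy of $G$. We must also ensure that the embedded vertices are distinct and stay separated. They are automatically distinct points of $P_\de$, but in the limit they could collapse. To prevent this, delete from $H_\de$ all edges between points at distance $<\eta$ for a fixed $\eta\ll t$; since $t$ is bounded away from $0$, no edges are actually lost. Two non-adjacent vertices of $G$ could still converge to the same point. To handle this, first pass to a subgraph: cover $E$ by boxes of side $\eta$ and randomly 2-color them, or more simply use a $v(G)$-partite refinement. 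Partition $P_\de$ into $v(G)$ classes by a random coloring of $\eta$-cells, keep only the edges between cells of distinct prescribed colors, and require that vertex $i$ of $G$ lie in color class $i$. A positive fraction of the edges survive. The extremal bound then applies to an auxiliary graph, giving distinct-cell copies, possibly after replacing $\al$ by any slightly smaller positive number, which is harmless since the inequality $s>1/\al$ is strict. Note that $t$ must lie in the interior of the interval, and $t\in\De_G(E)$ only needs non-degenerate vertex positions; equal points would merely give a degenerate copy.

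\textbf{Limit.} Taking $\de_j\to 0$, the embedded vertex tuples $(x_1^{(j)},\dots,x_{v(G)}^{(j)})\in E^{v(G)}$ have a convergent subsequence by compactness of $E$. The limit tuple has adjacent pairs at distance exactly $t$, and its points are distinct because they are forced into separated cells. Therefore every $t\in I$ lies in $\De_G(E)$, and so $\De_G(E)$ has non-empty interior.
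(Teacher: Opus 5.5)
\textbf{The gap is in step (iii).} Your steps (i) and (ii) are sound and follow the paper's architecture: discretize at scale $\de$, form the approximate distance graph, show it has $\gtrsim \de^{1-2s}=n^{2-1/s}$ edges, and beat $\extr(n,G)$ using $s\al>1$. The only difference is where the edge count comes from. You use continuity of the Mattila--Sj\"olin distance density. The paper instead uses the Iosevich--Taylor lemma (Lemmas~\ref{ITlem} and~\ref{GeometricITlem}) to get $\mu(A_{t,\e}(x))\approx\e$ on a set of measure $\approx 1$, which is a degree bound for $\approx\e^{-s}$ vertices. Both routes are uniform for $t$ in an interval.

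The failure is in your non-degeneracy argument. The hypothesis $\extr(n,G)\lesssim n^{2-\al}$ only produces \emph{some} copy of $G$. It gives no control over where the vertices land, so you cannot ``require that vertex $i$ of $G$ lie in color class $i$.''
\begin{itemize}
\item In your multipartite auxiliary graph, a copy of a bipartite $G$ can fold onto two classes. For example, a $C_4$ can use only edges between classes $1$ and $2$. Non-adjacent vertices of $G$ can then share a class, and even a single $\eta$-cell.
\item No transversal version follows from the unrestricted bound, with or without shrinking $\al$. Many edges between classes $i$ and $j$ for every $i\sim j$ do not force a transversal copy. For $C_4$, route the edges from class $2$ to classes $1$ and $3$ through disjoint halves of class $2$.
\item Making a random coloring work would need supersaturation plus an upper bound on the number of copies of $G$ with two vertices in a common $\eta$-cell. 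That bound is exactly the non-degeneracy you are trying to prove.
\item Even a correctly colored copy only puts the vertices in distinct $\eta$-cells. Neighboring cells contain arbitrarily close points, so your claim that the vertices are ``forced into separated cells'' is not established.
\end{itemize}

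\textbf{The paper does not close this gap either.} Lemma~\ref{approximatetoexact} uses only the separation $|x_i-x_j|>3\e$, i.e.\ disjointness of the balls $B_\e(x_i^\e)$. This separation vanishes as $\e\to0$ and does not stop two vertices from converging to the same point. Weak-$*$ convergence also does not give $\La(D)=\lim_j\La_{\e_j}(D)$ for the closed set $D$ of degenerate configurations. Here is a concrete failure:
\begin{itemize}
\item Let $b$ be the midpoint of a segment $ac$ of length $2t$.
\item Let $L$ be a segment through $b$ of length less than $t$, perpendicular to $ac$, with unit direction $v$, and set $E=\{a,c\}\cup L$.
\item Then $(a,b,c,b+4\e v)$ is a $(C_4,t,\e)$-approximation for every small $\e$.
\item Yet $\cG_t(E)$ has only the two edges $ab$ and $bc$. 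This is exactly the collapse in Figure~\ref{degeneracy}.
\end{itemize}
What your argument needs is, for each small $\de$, a copy of $G$ in $H_\de$ whose vertices are pairwise at distance at least some $\eta>0$ independent of $\de$. Neither your coloring nor the paper's lemma provides this. It requires a genuinely new ingredient, for example a quantitative count showing that near-degenerate copies of $G$ are a small fraction of all copies.
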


For simplicity, we will write $\lesssim$ in place of $\lesssim_{G,\al}$ in bounds such as the one in the hypothesis of Theorem \ref{ADmain}, with the understanding that the graph $G$ and power improvement $\al$ are to be understood as constants.  A classical result in graph theory says that $\extr(n,G)\approx n^2$ for all graphs $G$ which are not bipartite, so Theorem \ref{ADmain} is only non-trivial for bipartite graphs.  In this case, extimating extremal numbers is a highly active research area.  A result of Alon, Krivelevich, and Sudakov \cite{AKS03} (stated as Theorem \ref{AKStheorem} in this paper) gives the following corollary.

\begin{cor}
\label{aksADcorollary}
Let $E\subset \R^d$ be a compact, Ahlfors-David regular set with exponent $s$, let $G$ be a bipartite graph with parts\footnote{When we say $X$ and $Y$ are \textit{parts} of a bipartite graph, we mean that they are independent sets of vertices which form a partition of the vertex set.  Note that the sets $X$ and $Y$ are not determined uniquely by the graph.} $X$ and $Y$, and define
\[
r=\max_{v\in X}\deg(v).
\]
If $s>\max(\frac{d+1}{2},r)$, then then $\De_G(E)$ has non-empty interior.
\end{cor}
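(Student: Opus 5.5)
The corollary follows by feeding the Alon--Krivelevich--Sudakov bound into Theorem \ref{ADmain}. The result of \cite{AKS03} (Theorem \ref{AKStheorem}) says that if $G$ is bipartite with parts $X$ and $Y$, and every vertex of $X$ has degree at most $r$, then
\[
\extr(n,G)\lesssim_G n^{2-\frac{1}{r}}.
\]
So the hypothesis of Theorem \ref{ADmain} holds with $\al=\frac{1}{r}$, and then $\frac{1}{\al}=r$. The dimensional condition $s>\max(\frac{d+1}{2},\frac{1}{\al})$ in Theorem \ref{ADmain} is therefore exactly the assumed condition $s>\max(\frac{d+1}{2},r)$. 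Applying Theorem \ref{ADmain} gives that $\De_G(E)$ has non-empty interior.

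Two bookkeeping points need checking.

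\emph{The case $r=0$.} If some vertex of $X$ has degree $0$ and $r=0$, then $G$ has no edges. In that case $\De_G(E)$ is trivially all of $(0,\infty)$, since $E$ is infinite ($s>0$), and there is nothing to prove. Otherwise $r\geq 1$, so $\al=1/r\in(0,1]$ is a legitimate positive exponent.

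\emph{The implicit constant.} The constant in the AKS bound depends only on $G$ (through $r$ and $|V(G)|$), which is what the hypothesis $\extr(n,G)\lesssim_{G,\al} n^{2-\al}$ of Theorem \ref{ADmain} allows.

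The only possible obstacle is matching the exact form of Theorem \ref{AKStheorem}. If it is stated only for large $n$, small $n$ can be absorbed into the constant, since $\extr(n,G)\leq n^2$. If it is stated with the maximum degree on the smaller part, or with a slightly different exponent, I would adjust $\al$ accordingly. Since $X$ and $Y$ may be chosen freely, applying the result with the part $X$ whose maximal degree is $r$ is legitimate.
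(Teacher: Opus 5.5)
Your proof is correct and is the paper's argument: Theorem \ref{AKStheorem} gives the hypothesis of Theorem \ref{ADmain} with $\al=1/r$, so $1/\al=r$ and the dimensional conditions coincide. Your separate treatment of the edgeless case $r=0$, where $1/r$ is undefined and $\De_G(E)=(0,\infty)$ trivially, is a valid detail the paper leaves implicit.
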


We will give several examples in Section \ref{applications}.  Since we have already discussed cycles, we mention one corollary of our corollary.

\begin{cor}
\label{cyclesADcorollary}
Let $E\subset \R^d$ be a compact, Ahlfors-David regular set with exponent $s>\frac{d+1}{2}$. 
\begin{enumerate}[(a)]
\item If $d=2$, then $\De_{C_{2k}}(E)$ has non-empty interior for every $k\geq 3$.
\item If $d\geq 3$, then $\De_{C_{2k}}(E)$ has non-empty interior for every $k\geq 2$.
\end{enumerate}
\end{cor}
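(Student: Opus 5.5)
This follows from Corollary \ref{aksADcorollary} applied to $G=C_{2k}$, together with the hypothesis $s>\frac{d+1}{2}$; only the degree parameter needs checking. The cycle $C_{2k}$ is bipartite, with parts $X$ the odd vertices and $Y$ the even vertices. Every vertex of $C_{2k}$ has degree exactly $2$, so $r=\max_{v\in X}\deg(v)=2$. Corollary \ref{aksADcorollary} therefore gives that $\De_{C_{2k}}(E)$ has non-empty interior whenever $s>\max(\frac{d+1}{2},2)$.

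For $d\geq 3$ we have $\frac{d+1}{2}\geq 2$, so the assumption $s>\frac{d+1}{2}$ already forces $s>\max(\frac{d+1}{2},2)$. Part (b) follows for every $k\geq 2$, including $C_4$.

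For $d=2$ the threshold $\frac{d+1}{2}=\frac32$ is below $2$. Since $E\subset\R^2$ forces $s\leq 2$, the condition $s>2$ can never hold, and the corollary as stated does not apply. Here I would go back to Theorem \ref{ADmain} and use a sharper extremal bound. For $k\geq 3$ the Bondy--Simonovits theorem gives $\extr(n,C_{2k})\lesssim n^{1+1/k}$, that is, $\alpha=1-\frac1k$. Then $\frac1\alpha=\frac{k}{k-1}\leq\frac32$ for $k\geq 3$. Hence $s>\frac32=\frac{d+1}{2}$ implies $s>\max(\frac32,\frac1\alpha)$, and Theorem \ref{ADmain} gives part (a).

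The case $k=2$ is excluded because $\extr(n,C_4)\approx n^{3/2}$. This gives $\alpha=\frac12$ and $\frac1\alpha=2$, which is unattainable in the plane.

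Presumably the paper states Bondy--Simonovits among its cited extremal results in Section \ref{applications}. Citing it correctly is the only real step. The rest is the arithmetic $\frac{k}{k-1}\leq\frac32\iff k\geq 3$. One could use Bondy--Simonovits for $d\geq 3$ as well, but the Alon--Krivelevich--Sudakov route already suffices there.
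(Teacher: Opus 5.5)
Your proposal is correct and essentially matches the paper's proof. Part (b) comes from Corollary \ref{aksADcorollary} with $r=2$, using $\frac{d+1}{2}\geq 2$ for $d\geq 3$. Part (a) comes from Theorem \ref{ADmain} with the Bondy--Simonovits bound (Theorem \ref{BStheorem}), taking $\alpha=1-\frac{1}{k}$ so that $\frac{1}{\alpha}=\frac{k}{k-1}\leq\frac{3}{2}$ for $k\geq 3$.
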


Thus, under the additional assumption of Ahlfors-David regularity, we improve the dimensional thresholds from \cite{GIP17, IMMM25} in the case $d\geq 3$, and obtain the first non-trivial result in the case $d=2$.  We also note that in each case, the range of cycles allowed is best possible among even cycles.  In particular, Maga's aforementioned construction shows that it is not possible for any dimensional threshold to guarantee $\De_{C_4}(E)\neq \emp$ for $E\subset \R^2$.

A closely related body of work studies analogous problems in subsets of vector spaces over finite fields.  Throughout, given a number $q$ which is a power of an odd prime, we let $\F_q$ denote the unique (up to isomorphism) field with $q$ elements.  The ``distance'' in the $d$-dimensional vector space $\F_q^d$ is defined algebraically, as follows.

\begin{dfn}
For $x\in\F_q^d$, define the \textbf{length} of $x$ by
\[
\|x\|=x_1^2+\cdots +x_d^2.
\]
Define the \textbf{distance} between $x,y\in \F_q^d$ to be the field element $\|x-y\|$.  For any $t\in\F_q$, define the \textbf{$t$-distance graph} $\cG_t(E)$ to be the graph with vertex set $E$ and adjacency relation given by $x\sim y$ if and only if $\|x-y\|=t$.  Finally, for any graph $G$, define the \textbf{$G$-distance set} $\De_G(E)\subset \F_q$ as in Definition \ref{distancegraphdefinition}, with respect to the finite field distance $\|\cdot\|$ in place of the Euclidean norm.
\end{dfn}

If $E\subset \F_q^d$, the analogue of dimension is the exponent $s$ such that $|E|=q^s$.  The analogue of the distance set $\De(E)$ or graph distance set $\De_G(E)$ having positive measure is to have the set contain a positive proportion of the possible elements of $\F_q$; that is, to have $|\De(E)|\approx q$.

Most of the results on point configurations in the Euclidean setting have corresponding results in the finite field setting, and the finite techniques are often closely related to the continuous ones.  Iosevich and Rudnev \cite{IR07} proved that there exists a constant $c$ such that if $|E|>cq^{\frac{d+1}{2}}$ then in fact $\De(E)=\F_q$; that is, not only does one get a positive proportion of all possible distances, but in fact every field element is realized as a distance between points of $E$.  The exponent was then improved \cite{CEHIK, MPPRS} in steps which mirrored the progress in the continuous setting.  Bennett, Chapman, Covert, Hart, Iosevich, and Pakianathan \cite{BCCHIP16} proved that there exist constants $c_n$ such that $|E|>c_n q^\frac{d+1}{2}$ implies $|\De_{P_k}(E)|\approx q$, where $P_n$ is the $n$-path graph ($C_n$ with an edge deleted), generalizing the result of \cite{IR07} to point configurations.  For cycles, Iosevich, Jardine, and B. McDonald \cite{IJM21} prove that for $d\geq 3$, there exist constants $c_n$ such that $|E|>c_nq^{\frac{d+2}{2}}$ implies $\De_{C_n}(E)=\F_q\setminus \{0\}$.  It is interesting to note that unlike in the continuous setting, these results does not depend on the parity of the cycle length.  In the case $d=2$, it can be deduced from results of Fitzpatrick, Iosevich, B. McDonald and Wyman \cite{FIMW} that $|E|>4q^{7/4}$ implies $\De_{C_4}(E)\supset \F_q\setminus \{0\}$, and $|E|>cq^{15/8}$ implies $\De_{C_6}(E)\supset \F_q\setminus \{0\}$.  Our second main result is as follows.

\begin{thm}
\label{FFmain}
Let $G$ be a graph satisfying
\[
\extr(G,n)\lesssim_{G,\al} n^{2-\al}
\]
for some $\al>0$.  There exists a constant $c$ (depending on $G$ and $\al$) such that if $E\subset \F_q^d$ is a set satisfying $|E|\geq c q^{\max(\frac{d+1}{2},\frac{1}{\al})}$, then $\De_G(E)\supset \F_q\setminus\{0\}$.
\end{thm}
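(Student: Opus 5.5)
Fix $t\in\F_q\setminus\{0\}$. The plan is to show that the distance graph $\cG_t(E)$ has more than $\extr(|E|,G)$ edges. Then $\cG_t(E)$ contains a copy of $G$, so $t\in\De_G(E)$.

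The edge count comes from the standard Iosevich--Rudnev Fourier estimate. Let $S_t=\{x:\|x\|=t\}$ be the sphere of radius $t$. The number of ordered pairs is
\[
\nu(t):=|\{(x,y)\in E\times E:\|x-y\|=t\}| = q^{2d}\sum_{m}|\widehat{E}(m)|^2\widehat{S_t}(m),
\]
with the normalization $\widehat{f}(m)=q^{-d}\sum_x \chi(-x\cdot m)f(x)$. We split off the $m=0$ term, which equals $|E|^2|S_t|/q^d=|E|^2q^{-1}(1+O(q^{-(d-1)/2}))$ because $|S_t|=q^{d-1}+O(q^{(d-1)/2})$ for $t\neq0$. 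For $m\neq 0$ we use the Kloosterman/Gauss-sum bound $|\widehat{S_t}(m)|\lesssim q^{-\frac{d+1}{2}}$, which holds for $t\neq 0$, together with Plancherel, $\sum_m|\widehat E(m)|^2=q^{-d}|E|$. These give
\[
\nu(t)=\frac{|E|^2}{q}+O\!\left(q^{\frac{d-1}{2}}|E|\right).
\]
So if $|E|\geq c q^{\frac{d+1}{2}}$ with $c$ large, then $\nu(t)\geq \frac{|E|^2}{2q}$. Since $t\neq 0$ there are no loops, so $\cG_t(E)$ has at least $\frac{|E|^2}{4q}$ edges.

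Next we compare with the extremal hypothesis. We have $\extr(|E|,G)\leq C|E|^{2-\al}$, so it suffices that $\frac{|E|^2}{4q}>C|E|^{2-\al}$, which is equivalent to $|E|^{\al}>4Cq$, i.e. $|E|>(4C)^{1/\al}q^{1/\al}$. Taking $c\geq \max(c_0,(4C)^{1/\al}+1)$, where $c_0$ is the constant from the previous step, the hypothesis $|E|\geq cq^{\max(\frac{d+1}{2},\frac1\al)}$ guarantees both conditions at once. Note $q\geq 3$, so a power of $q$ with the larger exponent dominates the one with the smaller exponent. Thus $\cG_t(E)$ contains a subgraph isomorphic to $G$ for every $t\neq 0$, which proves $\De_G(E)\supset\F_q\setminus\{0\}$.

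The only real input is the sphere Fourier decay bound $|\widehat{S_t}(m)|\lesssim q^{-\frac{d+1}{2}}$ for $t\neq0$, $m\neq 0$. This is the classical Gauss-sum/Kloosterman computation: complete the square in $\sum_{s\neq0}\sum_x\chi(s(\|x\|-t)-x\cdot m)$ and apply Weil's bound. I expect it to be the step needing the most care, though it is standard and can be cited from \cite{IR07}. Everything else is bookkeeping of constants, which depend only on $G$, $\al$ and $d$.
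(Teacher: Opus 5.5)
Your proposal is correct and follows the paper's proof essentially verbatim. The paper cites the Iosevich--Rudnev count $\nu(t)=\frac{|E|^2}{q}+R(t)$ with $|R(t)|\leq 2q^{\frac{d-1}{2}}|E|$, which you instead re-derive from the $m=0$ term and the Kloosterman/Sali\'e bound; it then gets $\nu(t)>\frac{|E|^2}{4q}$ from $|E|>4q^{\frac{d+1}{2}}$ and beats $2c_0|E|^{2-\al}$ using $|E|>(8c_0q)^{1/\al}$, just as you do.

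One small remark: you say your constants depend on $d$, but the Weil bound has an absolute constant ($|\widehat{S_t}(m)|\leq 2q^{-\frac{d+1}{2}}$ for $m\neq 0$, $t\neq 0$). So your $c$ can be taken independent of $d$, as the statement requires.
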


In analogue with Corollaries \ref{aksADcorollary} and \ref{cyclesADcorollary}, we have the following.

\begin{cor}
\label{aksFFcorollary}
Let $E\subset \F_q^d$, let $G$ be a bipartite graph with parts $X$ and $Y$, and define
\[
r=\max_{v\in X}\deg(v).
\]
There exists a constant $c$ (depending on $G$ and $r$) such that if $|E|\geq c\cdot q^{\max(\frac{d+1}{2},r)}$, then $\De_G(E)\supset \F_q\setminus\{0\}$.
\end{cor}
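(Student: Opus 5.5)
Corollary \ref{aksFFcorollary} should follow from Theorem \ref{FFmain} combined with the Alon--Krivelevich--Sudakov bound (Theorem \ref{AKStheorem}), exactly as Corollary \ref{aksADcorollary} follows from Theorem \ref{ADmain}. The statement of \cite{AKS03} I will use is this: if $G$ is bipartite with parts $X$ and $Y$, and every vertex of $X$ has degree at most $r$, then
\[
\extr(n,G)\lesssim_G n^{2-\frac{1}{r}}.
\]
So the plan is to take $\al=\frac1r$ in Theorem \ref{FFmain}.

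First I dispose of the degenerate case $r=0$. Then $X$ consists of isolated vertices and $G$ has no edges, so the conclusion is trivial once $|E|\geq |V(G)|$. That inequality holds if the constant $c$ is chosen large enough, because $q\geq 3$.

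Now assume $r\geq 1$. Theorem \ref{AKStheorem} gives $\extr(n,G)\lesssim_{G} n^{2-\al}$ with $\al=1/r>0$, which is exactly the hypothesis of Theorem \ref{FFmain}. That theorem then produces a constant $c$, depending on $G$ and $\al$ and hence on $G$ and $r$, with the following property: whenever $|E|\geq c\,q^{\max(\frac{d+1}{2},\frac1\al)}=c\,q^{\max(\frac{d+1}{2},r)}$, we have $\De_G(E)\supset\F_q\setminus\{0\}$. This is precisely the claim.

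The only real point to check is that the form of Theorem \ref{AKStheorem} stated in the paper matches this use. In particular, the implicit constant should depend only on $G$ (or on $r$ and $|V(G)|$), and the bound should be stated with the degree condition on one part, not as a maximum-degree condition on all vertices. If the cited theorem is stated for $n$ large only, small $n$ are absorbed into the implicit constant, since $\extr(n,G)\leq n^2$.
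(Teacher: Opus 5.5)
Your proposal is correct and is essentially the paper's proof: Theorem \ref{AKStheorem} supplies the hypothesis of Theorem \ref{FFmain} with $\al=\frac{1}{r}$, and the resulting constant depends only on $G$ and $r$. Your separate handling of the edgeless case $r=0$, which the paper leaves implicit, is a harmless extra.
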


\begin{cor}
\label{cyclesFFcorollary}
For each $k\geq 2$, there exists a constant $c_k$ such that the following holds.  Let $E\subset \F_q^d$ be such that $|E|>c_kq^\frac{d+1}{2}$.  Then:
\begin{enumerate}[(a)]
\item If $d=2$, then $\De_{C_{2k}}(E)\supset \F_q\setminus \{0\}$ if $k\geq 3$.
\item If $d\geq 3$, then $\De_{C_{2k}}(E)\supset \F_q\setminus \{0\}$ if $k\geq 2$.
\end{enumerate}
\end{cor}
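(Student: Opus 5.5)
The statement to prove is Corollary \ref{cyclesFFcorollary}. I will deduce it from Theorem \ref{FFmain} and Corollary \ref{aksFFcorollary} by feeding in known extremal bounds for even cycles. The argument only needs a constant $\al$ with $\frac{1}{\al}\le\frac{d+1}{2}$ and $\extr(n,C_{2k})\lesssim n^{2-\al}$. Then $\max(\frac{d+1}{2},\frac1\al)=\frac{d+1}{2}$, and Theorem \ref{FFmain} gives the conclusion with $c_k$ equal to the constant produced there for $G=C_{2k}$.

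The input is the Bondy--Simonovits even cycle theorem, $\extr(n,C_{2k})\lesssim_k n^{1+1/k}$, which gives $\al=1-\frac1k$. We then need $\frac{k}{k-1}\le\frac{d+1}{2}$.

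For $d\ge3$ we have $\frac{d+1}{2}\ge2$, and $\frac{k}{k-1}\le 2$ for all $k\ge2$. This proves part (b). Alternatively, each vertex of $C_{2k}$ has degree $2$, so Corollary \ref{aksFFcorollary} with $r=2$ yields the same threshold $q^{\max(\frac{d+1}{2},2)}=q^{\frac{d+1}{2}}$ without quoting Bondy--Simonovits.

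For $d=2$ we have $\frac{d+1}{2}=\frac32$, and $\frac{k}{k-1}\le\frac32$ exactly when $k\ge3$. This proves part (a). The degree-based route fails here, since $r=2>\frac32$, so the even cycle theorem is genuinely needed.

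The only point needing care is that the extremal bound is uniform in $n$ with a constant depending only on $k$. Bondy--Simonovits provides this, and it is all Theorem \ref{FFmain} requires, so I expect no real obstacle. If the paper instead routes this through Theorem \ref{AKStheorem}, the $d\ge3$ case is immediate and only the $d=2$, $k\ge3$ case requires the sharper cycle bound.
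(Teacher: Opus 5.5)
Your proposal is correct and is essentially the paper's proof: part (a) applies Theorem \ref{FFmain} with $\al=1-\frac{1}{k}$ from Theorem \ref{BStheorem}, using $\frac{k}{k-1}\le\frac32$ for $k\ge3$. The only cosmetic difference is that the paper proves part (b) through Corollary \ref{aksFFcorollary} with $r=2$ (your stated alternative), whereas your primary route uses Bondy--Simonovits, which works equally well since $\frac{k}{k-1}\le 2\le\frac{d+1}{2}$.
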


We note that Corollary \ref{cyclesFFcorollary} improves the result of \cite{IJM21} on cycles in the finite field settings for even cycles in all dimensions $d\geq 3$, and improves the result of \cite{FIMW} for all even cycles except $C_4$ when $d=2$.

\subsection{Structure of the paper}
The idea of the proof is to study a finite graph which stands in for the distance graph $\cG_t(E)$.  In the finite field setting, this is simply the distance graph itself.  In the Ahlfors-David regular setting, it is a certain approximate distance graph based on a discretization of the set.  In each case, the goal is to obtain a lower bound on the number of edges of this finite graph under dimensional assumptions on the underlying set $E$.  The assumption $\extr(n,G)\lesssim n^{2-\al}$ then implies an upper bound on the number of edges in a graph avoiding copies of $G$.  If these bounds don't match, we must have a copy of $G$ in the (approximate) distance graph.

The paper is organized as follows.  Section \ref{regularizationsection} is devoted to understanding the structure of Ahlfors-David regular sets in a way that will ultimately allow us to define an approximate distance graph and estimate its number of edges.  This involves discretizing the set by covering by a finite collection of balls, and applying a regularization technique that allows us to say that many points are distance $t$ from many other points (in the language of graph theory, this can be interpreted as saying that many vertices have high degree).  In Section \ref{mainproofsection}, we prove our two main theorems (Theorem \ref{ADmain} and Theorem \ref{FFmain}).  Theorem \ref{FFmain} is more straightforward since the set in question is already finite, so we do this first.  To prove Theorem \ref{ADmain}, we use the same approach as the proof of Theorem \ref{FFmain} to show that the approximate distance graph has the desired subgraph.  The remainder of the section is used to prove that approximate copies of a graph can be turned into exact copies.  Finally, in Section \ref{applications}, we discuss several applications and prove Corollaries \ref{aksADcorollary}, \ref{cyclesADcorollary}, \ref{aksFFcorollary}, and \ref{cyclesFFcorollary}.

\section{Regularization and discretization of Ahlfors-David regular sets}
\label{regularizationsection}

If $\dim E>\frac{d+1}{2}$, then the Mattila-Sj\"{o}lin Theorem \cite{MS99} states that there exists an interval worth of $t>0$ such that $t\in\De(E)$.  However, $t\in\De(E)$ merely implies that there exist $x,y\in E$ such that $t=|x-y|$.  There are a number of refinements of this theorem which allow us to quantify the set of pairs $(x,y)$.  We follow an approach introduced by Iosevich and Taylor \cite{IT19}.  This lemma does not require Alhfors-David regularity, so we state a more general definition first.
\begin{dfn}
Let $E\subset \R^d$.  A \textbf{Frostman probability measure} on $E$ of exponent $s\in [0,d]$ is a probability measure $\mu$ which is supported on $E$ and satisfies
\[
\mu(B_r(x))\lesssim r^s
\]
for every $x\in\R^d,r>0$.
\end{dfn}
Note that, in particular, every $s$-regular measure is a Frostman probability measure of exponent $s$.  We denote by $\f$ a fixed smooth function supported on the unit ball, and let $\f(x)=\e^{-d}\f(x/\e)$ denote the corresponding approximate identity.  For any $t>0$, we denote by $\sig_t$ the normalized surface measure on the sphere of radius $t$ in $\R^d$, and for any $\e>0$ we write $\sig_t^\e=\sig_t*\f^\e$.  
\begin{lem}[Iosevich-Taylor \cite{IT19}, Lemma 2.1]
\label{ITlem}
Let $E\subset \R^d$, and let $\mu$ be a Frostman probability measure on $E$.  There exist constants $0<c<C,\de>0$ and an interval $I\subset (0,\infty)$ such that for any $t\in I$ and any sufficiently small $\e>0$, we have
\[
\mu(\{x\in E:c<\sig_t^\e*\mu(x)<C\})>\de,
\]
Moreover, the constants $c,C,\de$ do not depend on $\e$ or $t$.
\end{lem}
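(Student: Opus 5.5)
The plan is a first-moment/second-moment (Paley--Zygmund type) argument applied to $f_{t}^{\e}:=\sig_t^\e*\mu$ as a function on $(E,\mu)$. The statement as written omits a dimensional hypothesis; I will assume, as in \cite{IT19} and the Mattila--Sj\"{o}lin setting, that $\mu$ has Frostman exponent $s>\frac{d+1}{2}$. This is the case used later, since $E$ is $s$-regular with $s>\frac{d+1}{2}$.

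\textbf{Two target bounds.} I aim for:
\begin{enumerate}[(i)]
\item $\int f_t^\e\,d\mu\geq 2c_0$ for all $t\in I$ and all small $\e$;
\item $\int (f_t^\e)^2\,d\mu\leq M$ uniformly in $t\in I$ and $\e$.
\end{enumerate}
Given these, set $c=c_0/2$ and choose $C$ large with $M/C<c_0/2$. Then
\[
\int_{\{c<f_t^\e<C\}} f_t^\e\,d\mu\;\geq\; 2c_0-c-\int_{\{f_t^\e\geq C\}}f_t^\e\,d\mu\;\geq\; 2c_0-c-\frac{M}{C}\;\geq\; c_0,
\]
since $f_t^\e\geq 0$. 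The left side is at most $C\,\mu(\{c<f_t^\e<C\})$, so we may take $\de=c_0/(2C)$. All constants are independent of $t$ and $\e$.

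\textbf{First moment.} Write
\[
\int f_t^\e\,d\mu=\int \hat\sig_t(\xi)\,\hat\f(\e\xi)\,|\hat\mu(\xi)|^2\,d\xi .
\]
Since $|\hat\sig_t(\xi)|\lesssim_t|\xi|^{-\frac{d-1}{2}}$, the integral $\int|\hat\mu|^2|\xi|^{-\frac{d-1}{2}}\,d\xi$ is comparable to the energy $I_{\frac{d+1}{2}}(\mu)$. This energy is finite because $s>\frac{d+1}{2}$. Dominated convergence then shows that $g_\e(t)=\int f_t^\e\,d\mu$ converges, locally uniformly in $t\in(0,\infty)$, to the continuous function
\[
g(t)=\int\hat\sig_t(\xi)\,|\hat\mu(\xi)|^2\,d\xi .
\]
Up to normalization by $t^{d-1}$, $g$ is the density of the distance measure $\nu(A)=\mu\times\mu\{|x-y|\in A\}$, as in Mattila--Sj\"{o}lin. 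Since $\nu$ is a nonzero measure, $g\not\equiv 0$, and $g\geq 0$. By continuity, $g>4c_0$ on some compact interval $I$. Uniform convergence then gives (i) for $\e$ small.

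\textbf{Second moment (main obstacle).} Bound (ii) must be uniform in $\e$. I would prove it by a Sobolev trace argument. Since $\hat{f_t^\e}=\hat\sig_t\,\hat\f(\e\cdot)\,\hat\mu$ and $\int|\hat\mu|^2|\xi|^{\gamma-d}\,d\xi\lesssim I_\gamma(\mu)<\infty$ for $\gamma<s$, we get $f_t^\e\in H^{\beta}$ uniformly in $\e$ and in $t\in I$, for any $\beta<\frac{s-1}{2}$. On the other hand, $\int|h|^2\,d\mu\lesssim\|h\|_{H^{\beta}}^2$ whenever $\beta>\frac{d-s}{2}$; this follows by duality, since $\widehat{h\mu}$ is controlled by the $\frac{d-s}{2}$-energy bound on $\mu$. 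These two ranges are compatible exactly when $\frac{s-1}{2}>\frac{d-s}{2}$, that is, $s>\frac{d+1}{2}$. This step, and checking that the trace inequality holds for the Frostman measure uniformly, is where the dimensional hypothesis is really used. If the duality argument is delicate, I would instead fall back on a direct bilinear Fourier estimate of $\int|f_t^\e|^2\,d\mu$ via $\hat\mu*\hat\mu$.
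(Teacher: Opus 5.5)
The paper does not prove this lemma; it is quoted from \cite{IT19}. Your argument is correct and follows essentially the Iosevich--Taylor scheme. The lower bound on $\int\sig_t^\e*\mu\,d\mu$ comes from the continuous Mattila--Sj\"{o}lin distance density. The bound on $\int(\sig_t^\e*\mu)^2\,d\mu$, uniform in $\e$ and $t\in I$, is the $L^2(\mu)$-boundedness of $f\mapsto\sig_t^\e*(f\mu)$ applied to $f\equiv 1$. The conclusion then follows by Chebyshev-type pigeonholing. You are right to add $s>\frac{d+1}{2}$: as printed, the statement is false. A point mass is a Frostman measure of exponent $0$, and then $\sig_t^\e*\mu=0$ $\mu$-a.e.\ once $\e<t$. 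The paper only ever applies the lemma with $s>\frac{d+1}{2}$.

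Your trace step does not need hedging. We have $\|g\mu\|_{H^{-\beta}}^2=\iint g(x)\overline{g(y)}\,G_{2\beta}(x-y)\,d\mu(x)\,d\mu(y)$, where the Bessel kernel satisfies $0<G_{2\beta}(z)\lesssim|z|^{2\beta-d}$ and decays exponentially. So Schur's test makes $g\mapsto g\mu$ bounded from $L^2(\mu)$ to $H^{-\beta}$ exactly when $d-2\beta<s$. Duality, which is legitimate here since $\widehat{\sig_t^\e*\mu}\in L^1$, then gives $\int|h|^2\,d\mu\lesssim\|h\|_{H^\beta}^2$ for $h=\sig_t^\e*\mu$.

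Your fallback is the shorter route, though. For real $h$, $\int(\sig_t^\e*\mu)\,h\,d\mu=\int\widehat{\sig_t^\e}\,\hat\mu\,\overline{\widehat{h\mu}}\,d\xi$. Split $|\widehat{\sig_t^\e}(\xi)|\lesssim(1+|\xi|)^{-\frac{d-1}{2}}$ evenly by Cauchy--Schwarz. The first factor is bounded in terms of $I_{\frac{d+1}{2}}(\mu)$. The second is bounded by $\|h\|_{L^2(\mu)}$, via Schur's test with a kernel $\lesssim|x-y|^{-\frac{d+1}{2}}$. Taking $h=\sig_t^\e*\mu$ gives $\|\sig_t^\e*\mu\|_{L^2(\mu)}\lesssim 1$ directly, with the threshold $\frac{d+1}{2}$ entering once rather than through the window $\frac{d-s}{2}<\beta<\frac{s-1}{2}$.
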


We will use a more geometric version of this lemma.  For $t,\e>0$ and $x\in \R^d$ we let $A_{t,e}(x)$ denote the annulus centered at $x$ of inner radius $t$ and thickness $\e$; that is,
\[
A_{t,\e}(x)=B_{t+\e}(x)\setminus B_t(x).
\]
\begin{lem}
\label{GeometricITlem}
Let $E\subset \R^d$, and let $\mu$ be a Frostman probability measure on $E$.  There exists a set $E'\subset E$ and an interval $I\subset (0,\infty)$ such that $\mu(E')\approx 1$, and for every $x\in E'$, $t\in I$, and sufficiently small $\e>0$ we have
\[
\mu(A_{t,\e}(x))\approx \e,
\]
with constant independent of $\e$ and $t$.
\end{lem}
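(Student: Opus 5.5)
The difficulty is the order of quantifiers: Lemma \ref{ITlem} produces, for each pair $(t,\e)$, a set of measure $>\de$ on which $\sig_t^\e*\mu$ is comparable to $1$, but this set may move with $(t,\e)$. I will instead get pointwise control, uniform in $(t,\e)$, of the limiting function
\[
f(x,t)=\sig_t*\mu(x),
\]
and then restrict to one good set of $x$.

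\textbf{Step 1: a maximal/Sobolev bound in $t$.} I will work in the regime the paper uses, $\mu$ of exponent $s>\frac{d+1}{2}$; this is the same input behind Lemma \ref{ITlem} and Mattila--Sj\"olin. In that regime $\int|\widehat\mu(\xi)|^2|\xi|^{s-d}\,d\xi<\infty$, together with the decay $|\widehat{\sig_t}(\xi)|\lesssim|t\xi|^{-\frac{d-1}{2}}$ and the corresponding bounds for $\partial_t\widehat{\sig_t}$. From these I expect
\[
\int\!\!\int_J \Big(|f(x,t)|^2+\big|\partial_t^{\beta}f(x,t)\big|^2\Big)\,dt\,d\mu(x)<\infty
\]
for some $\beta>\frac12$ and any compact $J\subset(0,\infty)$, with the same bound for $\sig_t^\e*\mu$ uniformly in $\e$. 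I will also show $\sig_t^\e*\mu\to f$ in this norm as $\e\to0$. By Sobolev embedding in the variable $t$, the following hold for $\mu$-a.e. $x$:
\begin{itemize}
\item $t\mapsto f(x,t)$ is H\"older continuous on $J$, with a constant $K(x)\in L^2(\mu)$;
\item $\sup_{t\in J,\,\e<\e_0}|\sig_t^\e*\mu(x)-f(x,t)|\to0$ in $L^2(\mu)$ as $\e_0\to0$.
\end{itemize}
I expect this to be the main obstacle: it requires the Sobolev estimate in $t$ with exactly the half-derivative the dimension allows, which is the same computation as in Mattila--Sj\"olin.

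\textbf{Step 2: choosing $E'$ and $I$.} The function $f$ is nonnegative and satisfies $\int f(x,t)\,d\mu(x)=\sig_t*\mu*\mu$. This quantity is positive, and continuous in $t$, on some interval by Lemma \ref{ITlem}. Fix $t_0$ in that interval. Then the set $\{x: f(x,t_0)>2c\}$ has positive measure. I intersect it with the following sets, each of which has nearly full measure by Chebyshev's inequality:
\begin{itemize}
\item $\{K(x)\le K_0\}$;
\item $\{\sup_{t\in J}f(x,t)\le C_0\}$;
\item an Egorov-type set on which the convergence $\sup_{t,\e<\e_0}|\sig_t^\e*\mu-f|\to0$ holds uniformly in $x$.
\end{itemize}
The intersection $E'$ has $\mu(E')\approx1$. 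Let $I$ be an interval around $t_0$ short enough that H\"older continuity gives $c\le f(x,t)\le C_0$ for all $x\in E'$ and $t\in I$. This choice of $E'$ and $I$ is independent of $t$ and $\e$.

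\textbf{Step 3: passing from $f$ to annuli.} For $x\in E'$, $t\in I$ and small $\e$, I sandwich the indicator of $A_{t,\e}(x)$ between $\e$-multiples of averages of $\sig_\tau^{\e'}$ over $\tau\in[t,t+\e]$, with $\e'\ll\e$. This is done using a bump $\f\ge0$ that equals $1$ on a smaller ball, and polar coordinates. It gives
\[
\mu(A_{t,\e}(x))\approx\e\cdot\frac1\e\int_t^{t+\e}\sig_\tau^{\e'}*\mu(x)\,d\tau+O(\e\,\eta),
\]
where $\eta$ is the uniform error from Step 1. For $\e$ sufficiently small the error is negligible, and since $f(x,\tau)\in[c,C_0]$ we conclude $\mu(A_{t,\e}(x))\approx\e$. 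The constants are independent of $x$, $t$ and $\e$.
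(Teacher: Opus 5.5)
\textbf{The gap is Step 1.} The estimate there is not just hard; as stated it is false. It is also not ``the same computation as in Mattila--Sj\"olin.''

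Mattila--Sj\"olin concerns the \emph{unpinned} quantity
$\int f(x,t)\,d\mu(x)=\int\widehat{\sig}(t\xi)|\widehat{\mu}(\xi)|^2\,d\xi$.
Its continuity comes from absolute convergence of that integral, which is an $L^\infty$ argument using $|\widehat{\sig}(t\xi)|\lesssim|\xi|^{-\frac{d-1}{2}}$ and $I_{\frac{d+1}{2}}(\mu)<\infty$. It yields no half derivative in $t$.

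For the pinned function, the standard fixed-$t$ bound at frequency $2^j$ is
$\|\sig_t*(\mu*\psi_j)\|_{L^2(\mu)}\lesssim 2^{-j(s-\frac{d+1}{2})}$,
with $\widehat{\psi_j}$ a Littlewood--Paley cutoff. Each $t$-derivative costs $2^j$, so these bounds control only $\beta<s-\frac{d+1}{2}$ derivatives. That is below $\frac12$ unless $s>\frac{d+2}{2}$.

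\textbf{Counterexample.} Take $E=C\times[0,1]\subset\R^2$ with $C$ the middle-thirds Cantor set. It is AD-regular with $s=1+\log_3 2\in(\frac32,2)$. Conditioning on $a=|x_1-y_1|$, the distance density on $(0,1)$ is
\[
2r\int_{[0,r)}(r^2-a^2)^{-1/2}\,d\lambda(a)-2r\,\lambda([0,r)),
\]
where $\lambda$, the law of $|x_1-y_1|$, is singular with support $[0,1]$. The second term is smoother. The first is, up to smoother errors, a half-order fractional integral of $\lambda$. Since the $\dot H^{1/2}$ norm of $I^{1/2}\lambda$ equals $\|\lambda\|_{L^2}=\infty$, this term lies in $H^{1/2}(J)$ for no interval $J\subset(0,1)$.

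On the other hand, your Step 1 combined with Minkowski's inequality in $x$ would put $t\mapsto\int f(x,t)\,d\mu(x)$ in $H^\beta(J)$ for some $\beta>\frac12$. That function is $c_d t^{1-d}$ times the density above, so this is impossible. Hence the Hölder constants $K(x)$ and the uniform convergence used in Steps 2--3 are unavailable. (The pinned densities in this example are in fact continuous, so what fails is the $H^{1/2+}$ mechanism, not necessarily the conclusion.)

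\textbf{What the literal statement requires, and what the paper does.} Uniform bounds $\mu(A_{t,\e}(x))\approx\e$ for all $t\in I$ and all small $\e$ force the pinned distance measure of every $x\in E'$ to have a density in $[c,C]$ on $I$. That is a pinned Mattila--Sj\"olin theorem with two-sided bounds, far stronger than Lemma \ref{ITlem} provides.

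The paper does not attempt this. It takes $E'=\{x:c<\sig_t^\e*\mu(x)<C\}$ directly from Lemma \ref{ITlem}, so its $E'$ depends on $(t,\e)$. It then converts $\sig_t^\e*\mu(x)\approx1$ into an annulus bound by comparing $\f^\e$ with $\e^{-d}\chi_{B_\e}$.

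That per-$(t,\e)$ version is all the proof of Theorem \ref{ADmain} uses. There $t$ is fixed, and $E'$ enters only through $|X'|\approx\e^{-s}$ and the degree bound at a single scale $\e$. Your Step 3 kernel comparison, applied directly to $\sig_t^\e$ on that set, gives the slightly thickened annulus bounds this needs, without Steps 1 and 2.
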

\begin{proof}
Let $E,\mu$ be as in the statement of the lemma.  Let $c,C,\de$ be as given by Lemma \ref{ITlem}, and define
\[
E'=\{x\in E:c<\sig_t^\e*\mu(x)<C\}.
\]
By Lemma \ref{ITlem}, we have $\mu(E')>\de$ and $\sig_t^\e*\mu(x)\approx 1$ for every $x\in E'$.  Therefore,
\begin{align*}
1&\approx \int \sig_t^\e(x-y)d\mu(y) \\
&=\int\int \f^\e(x-y-z)d\sig_t(z)d\mu(y) \\
&\approx \e^{-d}\int\int_{|x-y-z|<\e}d\sig_t(z)d\mu(y).
\end{align*}
If $|x-y|-t>\e$ and $|z|=t$, then
\[
|x-y-z|\geq ||x-y|-t|>\e,
\]
hence the integral over the region $y\notin A_{t,\e}(x)$ is zero.  On the other hand, if $|x-y|<\frac{\e}{2}$, then
\[
\sig_t(\{z:|x-y-z|<\e\})\approx \e^{d-1}.
\]
Therefore,
\begin{align*}
1&\approx \e^{-d}\int\int_{|x-y-z|<\e}d\sig_t(z)d\mu(y) \\
&\approx \e^{-d}\e^{d-1}\int_{A_{t,\e}(x)}d\mu(y) \\
&\approx \e^{-1}\mu(A_{t,\e}(x)).
\end{align*}
\end{proof}

Our goal is to apply extremal graph theory results to the distance graphs $\cG_t(E)$, where $t$ is an arbitrary distance in the interval $I$ which is given by Lemmas \ref{ITlem}/\ref{GeometricITlem}.  In order to do this, we use the Alhfors-David assumption to discretize our set.

\begin{lem}[Discretization of AD-regular sets]
\label{discretization}
Let $E\subset \R^d$ be an Alhfors-David regular set with exponent $s$, and let $\mu$ be an $s$-regular measure on $E$.  For any $\e>0$, there exists a finite set $X=\{x_1,\dots,x_n\}\subset E$ such that the following hold:
\begin{enumerate}[(a)]
    \item $B_\e(x_i)\cap B_\e(x_j)=\emp $ for $i\neq j$,
    \item $E\subset \bigcup_{i=1}^n B_{3\e}(x_i)$,
    \item $n\approx \e^{-s}$.
\end{enumerate}
\end{lem}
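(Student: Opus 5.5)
I would take $X$ to be a maximal $2\e$-separated subset of $E$, so that any two distinct points of $X$ are at distance at least $2\e$. The existence of such a set, and its finiteness, follow from compactness of $E$. Alternatively, one can avoid any appeal to compactness, since the packing bound in the last step shows directly that every $2\e$-separated subset of $E$ has cardinality $\lesssim \e^{-s}$. Hence a separated set of maximal cardinality exists, and by construction it is maximal with respect to inclusion.

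Property (a) is then immediate. If $|x_i-x_j|\geq 2\e$, the open balls $B_\e(x_i)$ and $B_\e(x_j)$ are disjoint. For (b), I use maximality. If some $y\in E$ had $|y-x_i|\geq 2\e$ for every $i$, we could add $y$ to $X$ and keep it $2\e$-separated, a contradiction. So every $y\in E$ lies within $2\e$ of some $x_i$, and therefore $E\subset\bigcup_i B_{2\e}(x_i)\subset\bigcup_i B_{3\e}(x_i)$.

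For (c), I compare $n$ to the total mass of $\mu$ using $s$-regularity in both directions. Throughout I assume $\e\lesssim\text{diam}(E)$, so that the radii $\e$ and $3\e$ fall in the admissible range of the regularity condition. Each center satisfies $x_i\in E$.

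\emph{Upper bound.} Since the balls $B_\e(x_i)$ are disjoint, the lower regularity bound gives
\[
1\geq \sum_{i=1}^n \mu(B_\e(x_i))\gtrsim n\e^s,
\]
so $n\lesssim \e^{-s}$.

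\emph{Lower bound.} Since $\mu$ is supported on $E$ and $E$ is covered by the balls $B_{3\e}(x_i)$, the upper regularity bound gives
\[
1=\mu(E)\leq\sum_{i=1}^n\mu(B_{3\e}(x_i))\lesssim n(3\e)^s,
\]
so $n\gtrsim \e^{-s}$.

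The only delicate points are the range restriction $r\leq \text{diam}(E)$, which I handle by taking $\e$ small, and the fact that the centers must lie in $E$ for the regularity bounds to apply. Both are routine here, so I do not expect a real obstacle.
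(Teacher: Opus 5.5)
Your proof is correct and is essentially the paper's argument. The paper builds $X$ greedily, picking each new point of $E$ outside the $3\e$-balls around the previous ones and using the packing bound $n\e^s\lesssim 1$ to force termination; this is just a constructive way of producing a maximal separated set. It then gets both halves of (c) from the same disjointness and covering mass counts you use, and your remark that $\e$ must be small relative to $\text{diam}(E)$ is a legitimate caveat that the paper leaves implicit.
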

\begin{proof}
We construct the points $x_i$ recursively.  First, let $x_1\in E$ be arbitrary.  Given $x_1,\dots,x_k$, if $E\subset \bigcup_{i=1}^k B_{3\e}(x_i)$ then we stop the construction.  Otherwise, let $x_{k+1}\in E\setminus \bigcup_{i=1}^k B_{3\e}(x_i)$ be arbitrary.  By construction, $|x_{k+1}-x_i|>3\e$ for all $i\leq k$, which proves (a).  If this construction continues through step $n$, then by additivity and the $s$-regularity of the measure $\mu$ we have
\[
1=\mu(E)\geq \sum_{i=1}^n \mu(B_{\e}(x_i))\approx n\e^s,
\]
so the process must terminate after $n\lesssim \e^{-s}$ steps.  Since the process only terminates when the tripled balls cover $E$, this proves (b) and the lower bound in (c).  Finally, by subadditivity and the $s$-regularity of $\mu$ we have
\[
1=\mu(E)\leq \sum_{i=1}^n \mu(B_{3\e}(x_i))\lesssim n\e^s,
\]
proving the upper bound in (c).
\end{proof}

\section{Proofs of Theorems \ref{ADmain} and \ref{FFmain}}
\label{mainproofsection}

For each of our main theorems (Theorems \ref{ADmain} and \ref{FFmain}), the proof idea is as follows.  If a set is big enough, then the distance graph (or, a discretized version of the distance graph) must have many edges.  Applying the hypothesis $\extr(n,G)\lesssim n^{2-\al}$, we conclude that there is a copy of $G$ in the distance graph.  In the finite field setting, the distance graph is already finite so no discretization is necessary, and a good edge count is already known.  We will give this proof first.  The starting point is a theorem of Iosevich and Rudnev which counts edges in the distance graph.

\begin{thm}[Iosevich-Rudnev \cite{IR07}]
\label{IRlemma}
Let $E\subset \F_q^d$ with $d\geq 2$.  For $t\in\F_q$, define
\[
\nu(t)=|\{(x,y)\in E\times E:\|x-y\|=t\}|.
\]
For any $t\neq 0$, we have
\[
\nu(t)=\frac{|E|^2}{q}+R(t),
\]
where the remainder term $R(t)$ satisfies the bound
\[
|R(t)|\leq 2q^{\frac{d-1}{2}}|E|.
\]
\end{thm}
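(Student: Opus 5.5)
The plan is the standard Fourier-analytic proof of Iosevich and Rudnev. Let $S_t=\{x\in\F_q^d:\|x\|=t\}$ be the sphere of radius $t$, and fix a nontrivial additive character $\chi$ of $\F_q$. We use the normalization $\widehat{f}(m)=q^{-d}\sum_x \chi(-x\cdot m)f(x)$, so that Fourier inversion reads $f(x)=\sum_m \chi(x\cdot m)\widehat f(m)$. Writing $\nu(t)=\sum_{x,y}E(x)E(y)S_t(x-y)$ and expanding $S_t$ by Fourier inversion gives
\[
\nu(t)=q^{2d}\sum_{m}|\widehat{E}(m)|^2\,\widehat{S_t}(m).
\]
The $m=0$ term equals $|E|^2\, q^{-d}|S_t|$. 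We will call the sum over $m\neq 0$ the remainder $R'(t)$.

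The key input is the explicit evaluation of $\widehat{S_t}$ via Gauss sums. Detect the condition $\|x\|=t$ by $q^{-1}\sum_{j\in\F_q}\chi(j(\|x\|-t))$. Then complete the square in each coordinate and use $|G|=\sqrt q$ for the quadratic Gauss sum $G=\sum_{s}\chi(s^2)$. For $m\neq 0$ this yields
\[
\widehat{S_t}(m)=q^{-1}q^{-d}\sum_{j\neq 0}G(j)^d\,\chi\!\left(-jt-\frac{\|m\|}{4j}\right).
\]
Here $G(j)=\eta(j)G$, with $\eta$ the quadratic character. The inner sum over $j$ is a Kloosterman sum, twisted by a quadratic character when $d$ is odd. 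By Weil's bound (or Salié's explicit evaluation in the twisted case) it is at most $2\sqrt q$ whenever $t\neq 0$. Hence $|\widehat{S_t}(m)|\le 2q^{-\frac{d+1}{2}}$ for $m\neq 0$ and $t\neq 0$.

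The same computation at $m=0$ handles the main term. There $j$ contributes a Gauss-sum term, so $|S_t|=q^{d-1}+O(q^{\frac{d-1}{2}})$ (the error being $O(q^{d/2})$ at worst). Since the statement isolates $|E|^2/q$ exactly, the cleanest route is to move the deviation $|E|^2q^{-d}(|S_t|-q^{d-1})$ into the remainder as well. We then bound it together with the $m\neq0$ terms, using a single sum over all $j\neq 0$ and all $m$. Concretely, rewrite
\[
\nu(t)-\frac{|E|^2}{q}=q^{2d}\sum_m|\widehat E(m)|^2\,q^{-d-1}\sum_{j\neq0}G(j)^d\chi\!\left(-jt-\frac{\|m\|}{4j}\right).
\]
This is legitimate because the $j=0$ term contributes exactly $|E|^2/q$, and the formula above remains valid at $m=0$.

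The uniform bound $\bigl|\sum_{j\neq 0}\cdots\bigr|\le 2q^{\frac{d+1}{2}}$ then holds for every $m$, including $m=0$; at $m=0$ it reduces to a Gauss sum of size at most $q^{(d+1)/2}$. Combining this with Plancherel, $\sum_m|\widehat E(m)|^2=q^{-d}|E|$, gives
\[
|R(t)|\le q^{2d}\cdot q^{-d-1}\cdot 2q^{\frac{d+1}{2}}\cdot q^{-d}|E|=2q^{\frac{d-1}{2}}|E|.
\]
The main obstacle is the exponential sum estimate: the Weil bound for Kloosterman sums and its twisted (Salié) analogue for odd $d$. We will quote this as a known result. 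The condition $t\neq 0$ is needed exactly here, since for $t=0$ and $\|m\|=0$ the sum degenerates.
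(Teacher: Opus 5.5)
Your proposal is correct. It is the standard Iosevich--Rudnev argument: expand $\nu(t)=q^{2d}\sum_m|\widehat E(m)|^2\widehat{S_t}(m)$, isolate the $j=0$ term as exactly $|E|^2/q$, bound the remaining Kloosterman/Sali\'e sums by $2q^{(d+1)/2}$ uniformly in $m$ (including $m=0$), and finish with Plancherel. The paper does not reprove this statement but quotes it from \cite{IR07}, with the exposition in \cite{CovertBook}, and your argument is that proof. One minor aside: the degeneration at $t=0$, $\|m\|=0$ that you mention happens only for even $d$, since for odd $d$ the sum $\sum_{j\neq 0}\eta(j)$ vanishes; this does not affect the argument.
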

An excellent exposition of Lemma \ref{IRlemma} can be found in \cite{CovertBook}.  We are now ready to prove our second main theorem.

\begin{proof}[Proof of Theorem \ref{FFmain}]
For any $t\neq 0$, the distance graph $\cG_t(E)$ has $n=|E|$ vertices and $e=\frac{1}{2}\nu(t)$ edges.  Therefore, if
\begin{equation}
\label{extremalcondition}
\nu(t)>2\extr(|E|,G)
\end{equation}
holds, we can conclude $\cG_t(E)$ contains an isomorphic copy of $G$, and hence $t\in \De_G(E)$.  By the hypothesis of the theorem, (\ref{extremalcondition}) is implied by
\begin{equation}
\label{reducedextremalcondition}
\nu(t)>2c_0|E|^{2-\al}
\end{equation}
for some constant $c_0$ (depending on $G$ and $\al$).  Let $c=\max((8c_0)^{1/\al},4)$, and suppose $|E|>cq^{\max(\frac{d+1}{2},\frac{1}{\al})}$.  This assumption implies that both
\begin{equation}
\label{IRcondition}
|E|>4q^{\frac{d+1}{2}}.
\end{equation}
and
\begin{equation}
\label{finitefieldexp}
|E|>(8c_0q)^{1/\al}.
\end{equation}
hold.  By Theorem \ref{IRlemma} and (\ref{IRcondition}) we have
\begin{equation}
\label{boundforbigsets}
\nu(t)>\frac{|E|^2}{q}-2q^{\frac{d-1}{2}}|E|>\frac{|E|^2}{4q}
\end{equation}
for all $t\neq 0$.  By (\ref{finitefieldexp}) we have
\[
2c_0|E|^{2-\al}<2c_0|E|^2(8c_0q)^{-1}=\frac{|E|^2}{4q},
\]
which together with (\ref{boundforbigsets}) implies (\ref{reducedextremalcondition}).
\end{proof}

In order to implement this idea in the setting of Ahlfors-David regular sets, we will use our discretization lemma (Lemma \ref{discretization}) to obtain a finite set of points in $E$ which we can use to represent the set.  We use these points to define an approximate version of the distance graph, where two points are adjacent if and only if their distance is close to $t$.  We then apply our regularization lemma (Lemma \ref{GeometricITlem}) to conclude that this graph has many vertices with large degree, which implies that it has many edges.  From this, we conclude that there exists an approximate copy of $G$.  We would like to then pass to limits and say that these approximate copies of $G$ converge to an exact copy of $G$.  However, there is an important subtlety in this argument.  In order to have a true isomorphic copy of $G$, it is necessary to ensure that distinct vertices in our approximate copy of $G$ do not converge to a common limit.  For example, we must avoid the situation shown in Figure \ref{degeneracy}, where an appoximate copy of $C_4$ converges to an exact copy of $P_2$.

\begin{figure}
\centering
\begin{minipage}{.5\textwidth}
  \centering
\begin{tikzpicture}[scale=2, thick]
  \draw (0,0)--(1.1,0.9)--(2,2)--(0.9,1.1)--(0,0);
   \draw [fill, gray, opacity=.3] (0,0) circle [radius=0.1];
   \draw [fill, gray, opacity=.3] (1.1,0.9) circle [radius=0.1];
   \draw [fill, gray, opacity=.3] (.9,1.1) circle [radius=0.1];
   \draw [fill, gray, opacity=.3] (2,2) circle [radius=0.1];

   \node[below left] at (0,0) {$x_1^\e$};
   \node[below right] at (1.1,.9) {$x_2^\e$};
   \node[above right] at (2,2) {$x_3^\e$};
   \node[above left] at (.9,1.1) {$x_4^\e$};
\end{tikzpicture}
\end{minipage}%
\begin{minipage}{.5\textwidth}
  \centering
\begin{tikzpicture}[scale=2, thick]
 \draw (0,0)--(1,1)--(2,2);
 \draw [fill] (0,0) circle [radius=0.05];
   \draw [fill] (1,1) circle [radius=0.05];
   \draw [fill] (2,2) circle [radius=0.05];
 \node[below left] at (0,0) {$x_1$};
   \node[below right] at (1,1) {$x_2=x_4$};
   \node[above right] at (2,2) {$x_3$};
\end{tikzpicture}
\end{minipage}
\caption{The approximate 4-cycle $(x_1^\e,x_2^\e,x_3^\e,x_4^\e)$ converging to a degenerate configuration $(x_1,x_2,x_3,x_2)$.}
  \label{degeneracy}
\end{figure}
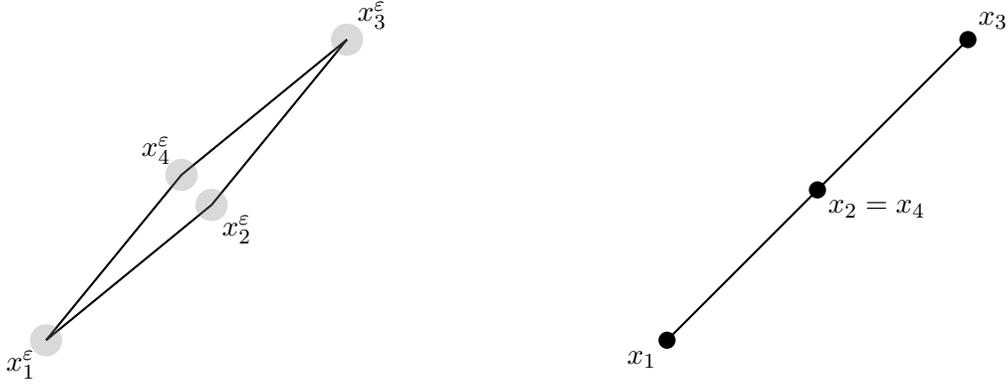

In order to handle this issue, we start with making a precise definition about what kinds of approximations we will allow.

\begin{dfn}
Let $\e,t>0$, and let $G$ be a graph with vertex set $\{1,\dots,m\}$ and adjacency relation $\sim$.  We say that a configuration
\[
x=(x_1,\dots,x_n)\in (\R^d)^m
\]
is a \textbf{$(G,t,\e)$-approximation} if the following conditions hold:
\begin{enumerate}[(a)]
    \item $||x_i-x_j|-t|<\e$ whenever $i\sim j$
    \item $|x_i-x_j|>3\e$ for each $i\neq j$.
\end{enumerate}
\end{dfn}

The standard technique for proving that configurations are not all degenerate is to define a measure on the space of configurations, and prove that the set of degenerate configurations has measure zero.  This is accomplished with the following lemma.

\begin{lem}
\label{weakstar}
Let $K\subset \R^d$ be a compact set, and for each $\e>0$, let $\La_\e$ be a Borel probability measure supported on $K$.  Then, there exists a sequence $\e_j\to 0$ such that for each Borel set $A\subset K$, the limit
\[
\La(A):=\lim_{j\to\infty} \La_{\e_j}(A)
\]
exists and defines a probability measure supported on $K$.
\end{lem}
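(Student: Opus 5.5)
The plan is to realize the limit $\La$ as a weak-$*$ limit in a space where pairing against an \emph{arbitrary} indicator function $\mathbf 1_A$ is continuous. Weak-$*$ convergence of measures, which only pairs against $C(K)$, cannot do this: it gives convergence of $\La_{\e_j}(A)$ only for $\La$-continuity sets. For instance, if $\La_{\e}=\delta_{\e}$ on $K=[0,1]$ and $A=\{\e_j: j\text{ even}\}$, the sequence $\La_{\e_j}(A)$ alternates between $0$ and $1$. So for an arbitrary Borel set $A$ the argument has to use extra structure of the measures $\La_\e$. The structure I would use, and which the $\La_\e$ built later from the Frostman/AD-regular measure $\mu$ should have, is a uniform domination: there is a fixed finite Borel measure $\nu$ on $K$ and a constant $M$ such that
\[
\La_\e(A)\le M\,\nu(A)\qquad\text{for all Borel } A\subset K \text{ and all } \e.
\]
The plan is to prove the lemma under this hypothesis, and to check when the lemma is applied that the configuration measures satisfy it, for instance with $\nu$ a suitable product of copies of $\mu$.

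\textbf{Step 1 (densities).} By Radon--Nikodym, write $d\La_\e=f_\e\,d\nu$ with $0\le f_\e\le M$. The family $\{f_\e\}$ then lies in the ball of radius $M$ in $L^\infty(\nu)=L^1(\nu)^*$.

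\textbf{Step 2 (compactness).} $K$ is a compact metric space, so its Borel $\sigma$-algebra is countably generated and $L^1(\nu)$ is separable. Hence the closed ball of $L^\infty(\nu)$ is weak-$*$ sequentially compact: this is Banach--Alaoglu together with metrizability on bounded sets, or concretely a Cantor diagonal argument over a countable dense subset of $L^1(\nu)$. This yields $\e_j\to0$ and $f\in L^\infty(\nu)$ with $0\le f\le M$ such that
\[
\int g f_{\e_j}\,d\nu\to\int g f\,d\nu \qquad \text{for every } g\in L^1(\nu).
\]

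\textbf{Step 3 (conclusion).} For any Borel $A\subset K$, take $g=\mathbf 1_A\in L^1(\nu)$. This gives $\La_{\e_j}(A)\to\int_A f\,d\nu=:\La(A)$. So the limit exists for every Borel set and is given by integrating a density, which makes $\La$ a countably additive finite measure. Taking $A=K$ gives $\La(K)=1$, and $\La$ is supported on $K$ because every $\La_\e$ is.

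The main obstacle is exactly the domination hypothesis. Without it the stated conclusion fails, as the $\delta_{\e}$ example shows. If the measures $\La_\e$ constructed later turn out not to be uniformly bounded by a fixed $\nu$, the fallback I would try first is uniform absolute continuity with respect to $\nu$ (uniform integrability of the $f_\e$). In that case the Dunford--Pettis theorem replaces Step 2, since it gives weak compactness in $L^1(\nu)$. Step 3 is then unchanged, because $\mathbf 1_A\in L^\infty(\nu)=L^1(\nu)^*$.
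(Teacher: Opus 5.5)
You are right that the statement as written is false, and you have located exactly where the paper's proof fails. Banach--Alaoglu in $C(K)^*$ gives only $\int g\,d\La_{\e_j}\to\int g\,d\La$ for $g\in C(K)$. That guarantees $\La_{\e_j}(A)\to\La(A)$ only for $\La$-continuity sets, such as $A=K$, which does legitimately give $\La(K)=1$. The paper's assertion that the convergence holds for every Borel $A$ is unjustified. Your $\delta_\e$ example refutes the lemma for every choice of sequence, provided you take $A$ to be every other term of a strictly decreasing subsequence of $(\e_j)$; if the $\e_j$ repeat, $\{\e_j:j\text{ even}\}$ can contain all of them. Your proof of the modified lemma under $\La_\e\le M\nu$ is correct. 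The Dunford--Pettis fallback is in fact the sharp condition: by Vitali--Hahn--Saks, any setwise convergent sequence of probability measures is uniformly absolutely continuous with respect to $\sum_j 2^{-j}\La_{\e_j}$.

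The gap is in the second half of your plan: neither the domination hypothesis nor the fallback can be verified where the lemma is used. In Lemma \ref{approximatetoexact}, $\La_\e$ is $\mu^m$ restricted to $Q_\e=B_\e(x_1^\e)\times\cdots\times B_\e(x_m^\e)$ and normalized. Its density with respect to $\mu^m$ is $\mathbf 1_{Q_\e}/\mu^m(Q_\e)\approx\e^{-ms}\mathbf 1_{Q_\e}$, which is neither bounded nor uniformly integrable. Indeed, no subsequence converges setwise at all. After passing to $x^{\e_j}\to x$, a setwise limit would satisfy $\La(B_r(x))=1$ for every $r>0$. Yet $\La(\{x\})=\lim_j\La_{\e_j}(\{x\})=0$, since the $s$-regular $\mu$ has no atoms, and this contradicts countable additivity.

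So no hypothesis these measures satisfy can rescue the conclusion as stated. The correct repair is to weaken the conclusion to weak-$*$ convergence, which is what the paper's argument actually proves. That still gives $\La(K)=1$. Via $\La(U)\le\liminf_j\La_{\e_j}(U)$ for open $U$, it also shows that every point of $\operatorname{supp}\La$ is a limit of the configurations $x^{\e_j}$. This is what Lemma \ref{approximatetoexact} uses to obtain a limit configuration in $E^m$ with $|x_i-x_j|=t$ for $i\sim j$.

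Note, however, that this weak-$*$ limit is a point mass, so it cannot exclude $x_i=x_j$ in the limit; the separation $|x_i^\e-x_j^\e|>3\e$ does not survive $\e\to0$. The absolute continuity your hypothesis would have provided is exactly what that step would need, and these $\La_\e$ do not have it.
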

\begin{proof}
Let $\cM(K)$ denote the space of finite complex Radon measures supported on $K$.  This is a Banach space under the total variation norm, and is dual to the (separable) space of continuous functions on $K$.  By the sequential version of the Banach-Alaoglu theorem, the closed unit ball is sequentially compact in the weak-$*$ topology.  Since any probability measure has norm 1, the set $\{\La_\e:\e>0\}$ has a weak-$*$ limit point $\La\in\cM(K)$.  Letting $\La_{\e_j}$ be a sequence with $\La$ as the weak-$*$ limit, we have
\[
\La(A)=\lim_{j\to\infty} \La_{\e_j}(A)
\]
for any Borel $A\subset K$.  In particular, 
\[
\La(K)=\lim_{j\to\infty} \La_{\e_j}(K)=1,
\]
hence $\La$ is a probability measure supported on $K$.
\end{proof}

\begin{lem}[Approximate distance graphs imply exact distance graphs]
\label{approximatetoexact}
Let $\mu$ be a probability measure on $\R^d$, let $E=\textup{supp } \mu$, let $t>0$, and let $G$ be a finite graph with vertex set $\{1,\dots,m\}$ and adjacency relation $\sim$.  Suppose that there exists $\e_0>0$ such that for every $\e\in (0,\e_0)$, the set $E^m$ contains a $(G,t,\e)$-configuration.  Then, $t\in\De_G(E)$.

\end{lem}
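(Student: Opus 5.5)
The plan is a compactness argument. First, pick a sequence $\e_j\to 0$ with $\e_j<\e_0$. For each $j$, let $x^{(j)}=(x^{(j)}_1,\dots,x^{(j)}_m)\in E^m$ be a $(G,t,\e_j)$-approximation, as the hypothesis provides. Next, translate so that $x^{(j)}_1$ stays in a fixed compact set. This is automatic when $E$ is compact, as in every application of the lemma in this paper; for unbounded $E$ it is an additional assumption we need. Condition (a) forces every vertex in the connected component of $1$ to lie within distance $m(t+\e_0)$ of $x^{(j)}_1$, and we treat each connected component of $G$ separately in the same way. So all coordinates lie in a fixed compact set. By Bolzano--Weierstrass we pass to a subsequence along which $x^{(j)}\to x=(x_1,\dots,x_m)$.

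Since $E=\textup{supp }\mu$ is closed, each $x_i\in E$. The map $(y_i,y_j)\mapsto |y_i-y_j|$ is continuous. Letting $j\to\infty$ in $\bigl||x^{(j)}_i-x^{(j)}_j|-t\bigr|<\e_j$ therefore gives $|x_i-x_j|=t$ whenever $i\sim j$. Thus $i\mapsto x_i$ sends adjacent vertices to points at distance exactly $t$.

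The main obstacle is that $i\mapsto x_i$ must be injective for it to give an isomorphic copy of $G$ in $\cG_t(E)$. Condition (b) only gives $|x^{(j)}_i-x^{(j)}_k|>3\e_j$, which tends to $0$, so it does not exclude the collapse shown in Figure \ref{degeneracy}. In fact, with only the hypotheses as literally stated, collapse can occur. Take $G=C_4$ and $t=1$, and let $\mu$ be an atomic probability measure whose support is
\[
E=\{(0,0),(2,0),(1,0)\}\cup\{(1,\pm 1/k):k\in\N\}.
\]
For small $\e$ this set contains $(G,1,\e)$-approximations: use $(0,0)$, $(1,h)$, $(2,0)$, $(1,-h)$ with $2h>3\e$ and $\sqrt{1+h^2}<1+\e$. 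Yet the only points of $E$ at distance exactly $1$ from each other are $(1,0)$ together with $(0,0)$ or $(2,0)$, and these do not form a $4$-cycle, so $1\notin\De_{C_4}(E)$. Therefore, to close the argument I would use the lemma in the form the applications actually supply, where the separation in (b) is uniform in $\e$. Concretely, I would require the approximations to satisfy $|x^{(j)}_i-x^{(j)}_k|>\eta$ for some fixed $\eta>0$. I would get this from Lemma \ref{weakstar}: take $\La_\e$ to be the normalized counting measure on the $(G,t,\e)$-approximations built from the discretization of Lemma \ref{discretization}. Then show that the limit $\La$ gives zero mass to the closed diagonal set $\{x:x_i=x_k\text{ for some }i\neq k\}$. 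This should follow from the Frostman bound $\mu(B_r)\lesssim r^s$, which controls the $\La_\e$-mass of the $r$-neighborhood of the diagonal by a constant times $r^s$ uniformly in $\e$. Hence $\La$ charges some configuration off the diagonal, and because every $\La_\e$ is supported on approximations, that configuration lies in the limiting set of exact configurations. This yields a non-degenerate copy of $G$ and $t\in\De_G(E)$.
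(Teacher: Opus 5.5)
Your reading of the statement is correct: as written it is false, and the paper's proof breaks exactly where you say. The paper's weak-$*$ argument is your Bolzano--Weierstrass step in disguise. Its $\La_\e$ is the normalized restriction of $\mu^m$ to $B_\e(x_1^\e)\times\cdots\times B_\e(x_m^\e)$, so any limit $\La$ is just the point mass at the limiting configuration $x$. The paper then concludes $x_i\neq x_j$ ``since the balls $B_\e(x_i^\e)$ and $B_\e(x_j^\e)$ are disjoint.'' That is precisely the non-sequitur you identify: it only gives $|x_i^\e-x_j^\e|>3\e\to0$.

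Your example is a valid counterexample, with one slip. For $k=1,2$ there are further unit distances: $(1,\pm1)$ to $(1,0)$, and $(1,\tfrac12)$ to $(1,-\tfrac12)$. The unit-distance graph is then a star plus a disjoint edge, so $1\notin\De_{C_4}(E)$ anyway; alternatively, just take $k\geq3$.

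You are also right that boundedness of $E$ is a genuine extra hypothesis. The paper uses it silently through Lemma \ref{weakstar}. The non-exact, uniformly separated quadrilaterals $(0,0),(1,0),(1,1+\tfrac1k),(0,1)$, shifted by $10k$ along the first axis, give a counterexample for unbounded $E$. However, ``translate'' is not an available move, since it takes the configuration out of $E$; simply assume $E$ compact.

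If you assume $E$ compact and add a uniform separation $|x^{(j)}_i-x^{(j)}_k|>\eta$, your compactness argument does prove the corrected lemma. What is not established is your plan for producing such approximations in the proof of Theorem \ref{ADmain}.

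The normalized counting measure on copies of $G$ in $H$ is not a product of copies of $\mu$. So the Frostman bound does not by itself control its mass near the diagonal. You would need two things: a lower bound on the number of copies (the extremal hypothesis, as used, yields only one), and an upper bound on nearly degenerate copies.

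Already for $G=C_4$ the crude estimate fails. Suppose every degree is $\approx\e^{1-s}$. Then $H$ has $\approx\e^{2-3s}$ paths $x_1x_2x_3$, and there are $\lesssim(r/\e)^s$ points $x_4\in X\cap B_r(x_2)$. This gives $\lesssim r^s\e^{2-4s}$ copies with $|x_2-x_4|<r$. The general lower bound from Cauchy--Schwarz for the total is only $\gtrsim e(H)^4/n^4\approx\e^{4-4s}$, so the ratio is $r^s\e^{-2}$, which is unbounded as $\e\to0$.

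Closing this gap requires bounding $\mu\bigl(B_r(x_2)\cap A_{t,C\e}(x_1)\cap A_{t,C\e}(x_3)\bigr)$ on average. Pointwise AD-regularity does not give this; it needs genuinely additional, e.g.\ Fourier-analytic, input. So your critique of the lemma stands, but your repair of the downstream theorem is a sketch with its key estimate missing.
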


\begin{proof} For each $\e>0$, let
\[
(x_1^\e,\dots,x_m^\e)\in E^m
\]
denote a $(G,t,\e)$-approximation.  Since $x_i^\e\in \textup{supp } \mu$ for each $i\in\{1,\dots,m\}$, we have $\mu(B_\e(x_i^\e))>0$ for any $\e>0$.  Let $\La_\e$ denote the normalized restriction of $\mu^{n}$ to the set
\[
B_\e(x_1^\e)\times\cdots\times B_\e(x_{m}^\e)\subset (\R^{d})^{m}.
\]
Apply Lemma \ref{weakstar} with these measures to get a weak-$*$ limit probability measure $\La$.  If $(x_1,\dots,x_{m})$ is in the support of $\La$, then it must be in the support of $\La_\e$ for arbitrarily small values of $\e$, hence $x_i$ must be a limit point of $\{x_i^\e\}_\e$.  It follows immediately that $x_i\in E$ (since $E$ is closed), and $|x_i-x_j|=t$ for every $i\sim j$.  It also follows that $x_i\neq x_j$ if $i\neq j$, since by definition of a $(G,t,\e)$-cycle, the balls $B_\e(x_i^\e)$ and $B_\e(x_j^\e)$ are disjoint.  Therefore, $\Lambda$ is a probability measure supported on the set
\[
\{(x_1,\dots,x_m)\in E^{m}:x_i \text{ distinct, } |x_i-x_j|=t \text{ whenever } i\sim j\}.
\]
Any configuration in this set must consist of the vertices of an isomorphic copy of the graph $G$ in the distance graph $\cG_t(E)$, so $t\in \De_G(E)$.
\end{proof}


We are now ready to prove Theorem \ref{ADmain}.

\begin{proof}[Proof of Theorem \ref{ADmain}]
If $E$ is an Ahlfors-David regular set of exponent $s$ and $\mu$ is an $s$-regular measure, then $\textup{supp }\mu=E$.  Therefore, by Lemma \ref{approximatetoexact} it suffices to prove that for every sufficiently small $\e>0$ and every $t$ in some interval, the set $E$ contains the vertices of an $(G,t,\e)$-approximation.  Let $E'$ and $I$ be as in Lemma \ref{GeometricITlem}, and for the remainder of the proof let $t\in I$ be fixed.  Let $X$ be as in Lemma \ref{discretization}, and define a graph $H$ with vertex set $X$ and adjacency defined by $x \sim_H y$ if and only if $||x-y|-t|<10\e$.  Then, $H$ has $n\approx \e^{-s}$ vertices, and it remains to estimate the number of edge.  Define
\[
X'=\{x\in X: E'\cap B_{3\e}(x)\neq\emp\}.
\]
We have
\[
1\approx \mu(E')\leq \sum_{x\in X'}\mu(B_{3\e}(x))\lesssim |X'|\e^s,
\]
hence $|X'|\approx \e^{-s}$.  For any $x,y\in X$, we have $x\sim_H y$ whenever $B_{3\e}(y)\cap A_{t,\e}(x)\neq\emp$.  Therefore, for $x\in X'$,
\[
\e\approx \mu(A_{t,\e}(x))\leq \sum_{\substack{y\in X \\ y\sim x}} \mu(B_{3\e}(y))\lesssim \e^{s}\deg(x),
\]
hence $\deg(x)\gtrsim \e^{1-s}$ for $x\in X'$.  Putting these bounds together, if $e$ is the number of edges in $G$, then
\[
e\approx \sum_{x\in X}\deg(x)\gtrsim |X'|\e^{1-s}\approx \e^{1-2s}.
\]
If $s>\frac{1}{\al}$, then for any $C>0$ there exists $\e_0$ such that
\[
\e^{1-2s}>C(\e^{-s})^{2-\al}
\]
for all $\e\in (0,\e_0)$.  If we take $C$ to be the implicit constant in the hypothesis $\extr(G,n)\lesssim n^{2-\al}$, then we conclude that $H$ contains an isomorphic copy of $G$.  By construction, the vertices of this copy of $G$ form a $(G,t,10\e)$-approximation in $E$.  By Lemma \ref{approximatetoexact}, it follows that $I\subset \De_G(E)$.
\end{proof}

\section{Examples and applications}
\label{applications}
\subsection{Proof of Corollaries \ref{aksADcorollary}, \ref{cyclesADcorollary}, \ref{aksFFcorollary}, and \ref{cyclesFFcorollary}.}
Our corollaries are based on the following theorem, which appears as Corollary 2.3 in \cite{AKS03}.
\begin{thm}[Alon-Krivelevich-Sudakov \cite{AKS03}]
\label{AKStheorem}
Let $G$ be a bipartite graph with parts $X$ and $Y$, and let
\[
r=\max_{v\in X}\deg(v).
\]
Then,
\[
\extr(n,G)\lesssim n^{2-\frac{1}{r}}
\]
\end{thm}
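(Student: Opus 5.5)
The plan is the dependent random choice argument of Alon, Krivelevich and Sudakov. Let $H$ be a graph on $n$ vertices with $e\geq Cn^{2-1/r}$ edges, where $C$ is a large constant depending on $G$. We will show that $H$ contains a copy of $G$. Put $a=|X|$ and $b=|Y|$, so $|V(G)|=a+b$.

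\textbf{Step 1: find a set $U$ where every $r$-tuple is rich.} The goal is a set $U\subset V(H)$ with $|U|\geq a$ such that every $r$-subset of $U$ has at least $a+b$ common neighbours.
\begin{enumerate}[(i)]
\item Choose $T=(v_1,\dots,v_r)$ uniformly and independently from $V(H)$, and let $A$ be the set of common neighbours of $T$.
\item By convexity (Jensen), writing $D=2e/n$ for the average degree,
\[
\mathbb{E}|A|=\sum_{v}\left(\frac{\deg v}{n}\right)^r\geq n\left(\frac{D}{n}\right)^r\gtrsim C^r.
\]
\item Call an $r$-subset $S$ \emph{bad} if it has fewer than $a+b$ common neighbours. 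A fixed $S$ lies in $A$ only if all $v_i$ are common neighbours of $S$, which has probability at most $\left(\frac{a+b}{n}\right)^r$.
\item Hence the expected number $Z$ of bad $r$-subsets inside $A$ satisfies
\[
\mathbb{E}Z\leq \binom{n}{r}\left(\frac{a+b}{n}\right)^r\leq \frac{(a+b)^r}{r!},
\]
a constant depending only on $G$.
\item For $C$ large, $\mathbb{E}(|A|-Z)\geq a$. Fix an outcome with $|A|-Z\geq a$.
\item Delete one vertex from each bad subset in $A$. The result $U$ has $|U|\geq a$, and every $r$-subset of $U$ has at least $a+b$ common neighbours.
\end{enumerate}

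\textbf{Step 2: embed $G$ greedily.} Embed $X$ injectively into $U$. Then embed the vertices $y\in Y$ one at a time.
\begin{enumerate}[(i)]
\item The images of the neighbours of $y$ form a subset of $U$ of size at most $r$.
\item Pad this subset inside $U$ to an $r$-set if needed. This only shrinks the common neighbourhood, so it stays at least $a+b$.
\item At most $a+b-1$ vertices are already used, so some common neighbour is free. Map $y$ to it.
\end{enumerate}
Every edge of $G$ joins $X$ to $Y$, so this produces a copy of $G$ in $H$. Therefore $\extr(n,G)<Cn^{2-1/r}$, which is the claim.

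The main obstacle is Step 1, specifically balancing $\mathbb{E}|A|$ against the expected number of bad tuples. The key point is that Jensen's inequality gives $\mathbb{E}|A|\gtrsim C^r$, which exceeds the constant bound on $\mathbb{E}Z$ with room to spare. There are also minor technicalities. First, the $v_i$ may repeat; this is harmless, because the bound on $\mathbb{E}|A|$ and the bound for a fixed $S$ both allow repetitions. Second, when $n<a+b$ the statement is trivial after adjusting constants.

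Alternatively, since the statement is quoted as Corollary 2.3 of \cite{AKS03}, one may cite it directly. The argument above is the proof I would give.
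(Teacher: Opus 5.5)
Your Step 1 is a correct run of dependent random choice: the bounds $\mathbb{E}|A|\ge (2C)^r$ and $\mathbb{E}Z\le (a+b)^r/r!$ are right, and repetitions among the $v_i$ are indeed harmless. Step 2, however, has the roles of $X$ and $Y$ reversed. The hypothesis bounds degrees only on the $X$ side: $\deg(x)\le r$ for $x\in X$, while a vertex $y\in Y$ may have as many as $|X|$ neighbours. So your claim (i) is false in general: the images of the neighbours of $y$ need not form a subset of $U$ of size at most $r$. Step 1 gives no control over common neighbourhoods of subsets of $U$ larger than $r$. For example, take $G=K_{2,3}$ with $X$ the three vertices of degree $2$, so $r=2$. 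Once $X$ is placed in $U$, each $y\in Y$ must go to a common neighbour of three vertices of $U$, which nothing guarantees. The same failure occurs for a star $K_{1,m}$ with its leaves in $X$, where $r=1$.

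The repair is to swap the roles.
\begin{itemize}
\item In Step 1 aim for $|U|\ge b=|Y|$, and embed $Y$ injectively into $U$.
\item Then embed the vertices $x\in X$ one at a time. Now $N_G(x)\subset Y$ has at most $r$ elements, so its image is a subset of $U$ of size at most $r$.
\item After padding this subset to an $r$-set inside $U$, it has at least $a+b$ common neighbours in $H$. Since at most $a+b-1$ vertices are already used, a free common neighbour exists.
\end{itemize}
This orientation also fixes a smaller slip in your padding step (ii). Padding to an $r$-set inside $U$ needs $|U|\ge r$, which $|U|\ge |X|$ does not ensure (e.g.\ a star with its centre in $X$). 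By contrast, $|U|\ge |Y|\ge r$ is automatic, because all neighbours of a vertex of $X$ lie in $Y$. With this correction your argument is the Alon--Krivelevich--Sudakov dependent random choice proof behind Corollary 2.3 of \cite{AKS03}. The paper itself gives no proof and simply cites that result.
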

\begin{proof}[Proof of Corollaries \ref{aksADcorollary} and \ref{aksFFcorollary}]
If $G$ satisfies the hypothesis of Corollary \ref{aksADcorollary} (respectively, Corollary \ref{aksFFcorollary}), then by Theorem \ref{AKStheorem}, it also satisfies the hypothesis of Theorem \ref{ADmain} (respectively, Theorem \ref{FFmain}) with $\al=\frac{1}{r}$.  The conclusion follows from those theorems.  
\end{proof}

\begin{proof}[Proof of Corollaries \ref{cyclesADcorollary}(b) and \ref{cyclesFFcorollary}(b)]
For any $k\geq 2$, the cycle $C_{2k}$ is a bipartite graph in which every vertex has degree $2$.  If $d\geq 3$, then
\[
\max\left(\frac{d+1}{2},2\right)=\frac{d+1}{2},
\]
so the assumption $s>\frac{d+1}{2}$ allows us to apply Corollary \ref{aksADcorollary} with $G=C_{2k}$ and $r=2$ to get the conclusion of Corollary \ref{cyclesADcorollary}(b).  Using Corollary \ref{aksFFcorollary} instead, we get the conclusion of Corollary \ref{cyclesFFcorollary}(b).
\end{proof}
Note that the above proof does not work in the case $d=2$, as in this case we have
\[
\max\left(\frac{d+1}{2},2\right)=2.
\]
This does not yield a non-trivial exponent.  To get a result on cycles in the plane, we need a better bound than Theorem \ref{AKStheorem}.  We may use the following.
\begin{thm}[Bondy-Simonovits \cite{BS74}]
\label{BStheorem}
For any $k\geq 2$, we have
\[
\extr(n,C_{2k})\lesssim_k n^{1+\frac{1}{k}}.
\]
\end{thm}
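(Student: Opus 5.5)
The plan is the breadth-first-search layering argument in the style of Bondy and Simonovits, streamlined along the lines of later expositions such as Pikhurko's. First I would reduce to a graph that is bipartite and has large minimum degree. Suppose $H$ is a graph on $n$ vertices with no copy of $C_{2k}$ and with $e(H)\geq Ck\, n^{1+\frac{1}{k}}$, where $C$ is a large absolute constant. Every graph has a bipartite subgraph containing at least half of its edges, and a bipartite subgraph contains no $C_{2k}$ because $H$ does not. Repeatedly deleting vertices of degree less than half the average degree then yields a nonempty bipartite subgraph $H'$ with $\delta(H')\geq \delta:=c k\, n^{1/k}$. The goal is to derive a contradiction from the existence of $H'$.

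Fix a root $x\in H'$ and let $V_0=\{x\},V_1,\dots,V_k$ be the breadth-first-search layers. Since $H'$ is bipartite, every edge joins two consecutive layers. The core claim is that $|V_{i+1}|\geq \frac{\delta}{4k}|V_i|$ for every $0\leq i<k$. Given this, $|V_k|\geq (\delta/4k)^k>n$ once $c$ is large, which is the desired contradiction.

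To prove the expansion claim, suppose it fails for some $i$. Every vertex of $V_i$ has at least $\delta$ edges going to $V_{i-1}\cup V_{i+1}$. Combining this with the induction hypothesis $|V_{i-1}|\leq \frac{4k}{\delta}|V_i|$ (or a suitable averaging if the inductive bound is not strong enough), the bipartite graph between two consecutive layers has average degree at least of order $k$. Hence it contains a subgraph $F$ with minimum degree at least $2k$. Now let $T$ be the minimal subtree of the BFS tree whose set of leaves contains $V(F)\cap V_i$, and let $y$ be its branching root at depth $i-h$. Split the children of $y$ in $T$ into two nonempty groups, which induces a colouring $(A,B)$ of the vertices of $F$ in $V_i$ such that every $A$-vertex and every $B$-vertex is reached from $y$ by disjoint tree paths of length $h$.

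The main obstacle is the following path lemma, and it carries the real content. \emph{In a bipartite graph $F$ with $\delta(F)\geq 2k$ whose vertices on one side are coloured $A/B$ so that a positive fraction of the vertices on that side receive each colour, there is an $A$--$B$ path of every even length $2\ell\leq 2k-2$.} Granting it, I take an $A$--$B$ path in $F$ of length $2k-2h$ and close it up with the two tree paths of length $h$ through $y$. The tree paths live in layers of index below $i$ except at their endpoints, so they are disjoint from the interior of the $F$-path. The result is a cycle of length exactly $2k$ (with small case adjustments, for instance when $2k-2h<2$). To prove the path lemma, I would first try the standard approach: take a longest path in $F$ and use the minimum degree to find many chords from its endpoint. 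This produces a $\Theta$-like structure whose cycle segments realise paths of all lengths between both colour classes. If the colouring is unbalanced, I would pass to a subgraph in which both colours remain well represented. This step is exactly where the Bondy--Simonovits argument needs its care, and I expect it to dominate the proof.
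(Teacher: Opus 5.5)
The paper does not prove this statement. It quotes it from Bondy and Simonovits and uses it as a black box, so your argument has to stand on its own. The outer framework is the standard one and is fine: the reduction to a bipartite graph with minimum degree $\delta\approx ckn^{1/k}$, the BFS layering with edges only between consecutive layers, the expansion bookkeeping giving $(\delta/4k)^k>n$, and closing an $A$--$B$ path of length $2k-2h$ through the branching vertex $y$. Two small corrections:
\begin{enumerate}[(i)]
\item Since $h\le i\le k-1$, the case $2k-2h<2$ never arises.
\item You must always colour the \emph{lower} of the two layers spanned by $F$. If the dense pair is $(V_{i-1},V_i)$ and you colour $V_i$, your tree paths pass through $V_{i-1}$ and can hit the $F$-path. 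In that case colour $V_{i-1}$ instead.
\end{enumerate}

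The genuine gap is the path lemma, which carries all the content of the theorem and is false as you state it. Take $F$ to be two disjoint copies of $K_{2k,2k}$, and colour the $X$-side of the first copy $A$ and the $X$-side of the second copy $B$. Then $F$ is bipartite, $\delta(F)=2k$, and each colour occupies half of $X$, yet there is no $A$--$B$ path of any length. This can happen in your construction: $F$ may be disconnected, with its components hanging below different children of $y$. So balance is not the issue, and your proposed repair (passing to a subgraph where both colours stay well represented) targets the wrong thing.

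What is needed is that the two colours interact inside $F$. For example, pass to a connected component of $F$ \emph{before} building $T$ and $y$. You then need an actual proof that a connected bipartite $F$ with large minimum degree and a nontrivial colouring of one side contains $A$--$B$ paths of every even length $2,4,\dots,2k-2$.

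Your longest-path sketch does not supply this. The $\Theta$-graph formed by the chords at the end of a longest path can lie entirely inside one colour class; think of two dense monochromatic blocks joined by a single edge, where the longest path ends deep inside a block. Even when a cycle does carry both colours, segments along it need not realise every prescribed even length, because the colouring can be periodic along the cycle. Forcing a $\Theta$-subgraph to meet both colours in the right way, and then extracting paths of each exact length, is precisely what the Bondy--Simonovits $\Theta$-graph lemma (and its refinements by Verstra\"ete and Pikhurko) provides. Without that lemma, or an equivalent argument, the proof is incomplete.
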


\begin{proof}[Proof of Corollaries \ref{cyclesADcorollary}(a) and \ref{cyclesFFcorollary}(a)]
Theorem \ref{BStheorem} implies that $\extr(n,C_{2k})\lesssim n^{2-\al}$ with
\[
\al=1-\frac{1}{k}.
\]
If $k\geq 3$ then $\al\geq  \frac{2}{3}$, hence
\[
\max\left(\frac{3}{2},\frac{1}{\al}\right)=\frac{3}{2}.
\]
We can therefore apply Theorem \ref{ADmain} to get Corollary \ref{cyclesADcorollary}(a).  By applying Theorem \ref{FFmain} instead, we get Corollary \ref{cyclesFFcorollary}.
\end{proof}

\subsection{The hypercube graph $Q_k$}
\begin{dfn}
The \textbf{hypercube graph}, denoted $Q_k$, is the graph with vertex set $\{0,1\}^k$ and adjacency relation defined by $
(a_1,\dots,a_k)\sim (b_1\dots,b_k)$ if and only if $a_i\neq b_i$ for exactly one $i\in\{1,\dots,k\}$.
\end{dfn}
\begin{figure}
\centering
\begin{minipage}{.5\textwidth}
  \centering
\begin{tikzpicture}[scale=2, thick]
  \draw (0,0)--(1,0)--(1.4,.3)--(.4,.3)--(0,0);
  \draw [fill] (0,0) circle [radius=0.05];
  \draw [fill] (1,0) circle [radius=0.05];
  \draw [fill] (1.4,.3) circle [radius=0.05];
  \draw [fill] (.4,.3) circle [radius=0.05];

  \node[below left] at (0,0) {$00$};
  \node[below left] at (1,0) {$10$};
  \node[above right] at (.4, .3) {$01$};
  \node[above right] at (1.4,.3) {$11$};
\end{tikzpicture}
\end{minipage}%
\begin{minipage}{.5\textwidth}
  \centering
\begin{tikzpicture}[scale=2, thick]
 \draw (0,0)--(1,0)--(1.4,.3)--(.4,.3)--(0,0);
 \draw (0,.8)--(1,.8)--(1.4,1.1)--(.4,1.1)--(0,.8);
 \draw (0,0)--(0,.8);
 \draw (1,0)--(1,.8);
 \draw (1.4,.3)--(1.4,1.1);
 \draw (.4,.3)--(.4,1.1);
  \draw [fill] (0,0) circle [radius=0.05];
  \draw [fill] (1,0) circle [radius=0.05];
  \draw [fill] (1.4,.3) circle [radius=0.05];
  \draw [fill] (.4,.3) circle [radius=0.05];
  \draw [fill] (0,.8) circle [radius=0.05];
  \draw [fill] (1,.8) circle [radius=0.05];
  \draw [fill] (1.4,1.1) circle [radius=0.05];
  \draw [fill] (.4,1.1) circle [radius=0.05];

  \node[below left] at (0,0) {$000$};
  \node[below left] at (1,0) {$100$};
  \node[above right] at (.4, .3) {$010$};
  \node[above right] at (1.4,.3) {$110$};
  \node[below left] at (0,0+.8) {$001$};
  \node[below left] at (1,0+.8) {$101$};
  \node[above right] at (.4, .3+.8) {$011$};
  \node[above right] at (1.4,.3+.8) {$111$};
\end{tikzpicture}
\end{minipage}
\caption{The graphs $Q_2$ and $Q_3$.}
  \label{hypercube}
\end{figure}
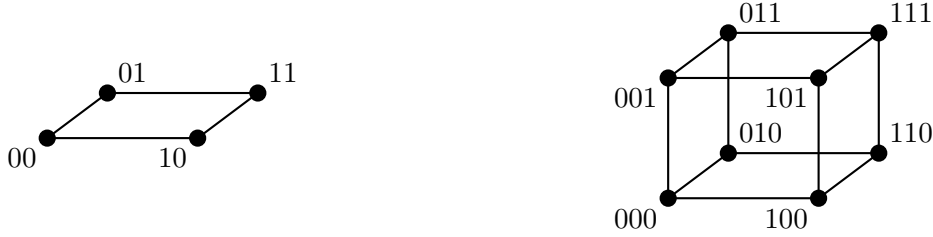
Geometrically, $Q_k$ can be understood as the vertices of a $k$-dimensional hypercube, with edges corresponding to the edges of the cube (Figure \ref{hypercube}).  Note that $Q_k$ is a bipartite graph, with independent sets given by sorting vertices $(a_1,\dots,a_k)\in\{0,1\}^k$ according to whether $\sum_i a_i$ is even or odd.  Further, every vertex has degree $k$.  We can therefore apply Corollary \ref{aksADcorollary} to conclude that for $k<d$, a set of dimension $s>k$ in either the Ahlfors-David regular or finite field setting has an abundance of hypercubes.  When $k=1,2$ this does not say anything new, since $Q_1=K_2$ and $Q_2=C_4$.  For $k\geq 3$ we can do better by applying a classical result of Erd\"{o}s and Simonovits \cite{ES69} when $k=3$, and a recent result of Janzer and Sudakov \cite{JS24} when $k\geq 4$.

\begin{thm}[Erd\"{o}s-Simonovits \cite{ES69}]
\label{EStheorem}
We have the bound 
\[
\extr(n,Q_3)\lesssim n^{8/5}.
\]
\end{thm}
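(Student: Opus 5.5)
This is the classical Erd\H{o}s--Simonovits theorem on $Q_3$, whose proof the paper cites. The plan is to reduce to nearly regular graphs and count paths of length two through common neighborhoods.

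\textbf{Regularization.} Let $H$ be a $Q_3$-free graph on $n$ vertices with $e \geq C n^{8/5}$ edges. I would apply the Erd\H{o}s--Simonovits regularization lemma to pass to a subgraph $H'$ on $m$ vertices with the following properties:
\begin{itemize}
\item $m \geq n^{c}$ for some fixed $c>0$;
\item $H'$ has at least $\tfrac{2}{5}$-proportionally many edges, that is, $e(H') \gtrsim m^{8/5}$;
\item every degree in $H'$ is within a constant factor $K$ of the average degree $D \approx m^{3/5}$.
\end{itemize}
This reduction loses only constants in the exponent bound. By also passing to a bipartite subgraph with at least half the edges, I may assume $H'$ is bipartite with parts $A$ and $B$.

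\textbf{Common neighborhoods.} Next I would study, for a vertex $v$, the auxiliary graph $H_v$ on $N(v)$ defined as follows: join $x,y \in N(v)$ when they have many common neighbors other than $v$. The key structural claim is this. If $v$ together with $x,y,z \in N(v)$ and suitable second neighbors formed the configuration ``$x,y,z$ pairwise having common neighbors $w_{xy}, w_{yz}, w_{xz}$, plus a vertex adjacent to all three $w$'s,'' we would get a copy of $Q_3$. So $Q_3$-freeness forces the following: for a typical $v$, the sets $N(x)\cap N(y)$ for $x,y\in N(v)$ cannot all be large and ``linked.''

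\textbf{Counting.} Concretely, I would count pairs $(x,y)$ in $A$ with their codegree. We have
\[
\sum_{x,y} |N(x)\cap N(y)| = \sum_{b\in B} \deg(b)^2 \approx m D^2.
\]
By $Q_3$-freeness, the bipartite graph between $N(x)\cap N(y)$ and the rest must be $C_4$-sparse in a specific sense. Applying the K\H{o}v\'ari--S\'os--Tur\'an bound inside these neighborhoods gives an inequality of the form $D^5 \lesssim m^3$, i.e.\ $D \lesssim m^{3/5}$. Combined with the regularity $D \approx e(H')/m$, this yields $e \lesssim n^{8/5}$.

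\textbf{Main obstacle.} The hard step is the second one: extracting a cube from many pairs with large common neighborhoods. I would argue by embedding $Q_3$ as two 4-cycles $x_1y_1x_2y_2$ and $x_1'y_1'x_2'y_2'$ joined by a matching. I would count ``$C_4$ with a matched shifted copy'' via Cauchy--Schwarz twice, using near-regularity to control degenerate terms where vertices coincide. Honestly, for the paper itself the intended proof is simply a citation of \cite{ES69}.
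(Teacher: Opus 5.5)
The paper gives no argument for Theorem \ref{EStheorem}; it simply quotes \cite{ES69}. Read as a proof, your proposal has a genuine gap at the one step that carries all the content: the inequality $D^5\lesssim m^3$ is announced rather than derived, and neither mechanism you name can produce it.

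K\H{o}v\'ari--S\'os--Tur\'an is the wrong tool, because $Q_3$-freeness does not forbid $4$-cycles in neighbourhood graphs of this kind. A $4$-cycle between $N(v)$ and $N(u)$, together with the two apexes $u,v$, is $K_{3,3}$ minus an edge, not $Q_3$. Indeed $K_{3,t}$ is $Q_3$-free, yet its neighbourhood graphs are full of $4$-cycles. Moreover, even a $C_4$-type bound of $D^{3/2}$ edges per neighbourhood pair would only give $D\lesssim m^{2/3}$. That is the exponent $5/3$, already provided by Theorem \ref{AKStheorem}.

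The fallback in your last paragraph also cannot reach $8/5$. That fallback counts two $4$-cycles joined by a matching via Cauchy--Schwarz and then discards degenerate terms. Such a count bounds the number of homomorphisms of $Q_3$ from below by roughly $m^8(D/m)^{12}=D^{12}/m^4$. But the degenerate homomorphisms sending one colour class of $Q_3$ to a single vertex already number $\sum_a\deg(a)^4\approx mD^4$. This is larger whenever $D\lesssim m^{5/8}$, in particular at $D\approx m^{3/5}$. Near-regularity controls these terms only up to constants, so nothing survives the subtraction.

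The missing idea is already latent in your second paragraph. Your configuration says that $x\,w_{xy}\,y\,w_{yz}\,z\,w_{xz}$ is a $6$-cycle. The vertex $v$ is joined to its three vertices in one class, and $u$ (the common neighbour of the $w$'s) is joined to its three vertices in the other. So $Q_3$ is a $C_6$ plus two apexes on alternate vertices.

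Hence $Q_3$-freeness of $H'$ gives the following: for every $u\in A$ and $v\in B$, the bipartite graph $G_{u,v}$ induced by $H'$ between $N(v)\setminus\{u\}$ and $N(u)\setminus\{v\}$ is $C_6$-free. It has at most $2KD$ vertices, so Theorem \ref{BStheorem} with $k=3$ gives $e(G_{u,v})\lesssim D^{4/3}$. Every path $v\,x\,w\,u$ of length three contributes its middle edge $xw$ to $G_{u,v}$. Near-regularity gives $\gtrsim mD\cdot D^2$ such paths, so
\[
mD^3\lesssim \sum_{u\in A,\,v\in B} e(G_{u,v})\lesssim m^2D^{4/3},
\]
which is precisely $D^5\lesssim m^3$. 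Thus the right count is of $3$-paths rather than codegrees, and the extremal input is the $C_6$ bound rather than K\H{o}v\'ari--S\'os--Tur\'an.

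Two smaller repairs are needed.
\begin{enumerate}[(i)]
\item Pass to a bipartite subgraph via a maximum cut, so each vertex keeps at least half its degree and near-regularity survives.
\item Use the form of the Erd\H{o}s--Simonovits regularization lemma giving $e(H')\ge \frac{2C}{5}m^{8/5}$ with $K$ independent of $C$. A bound $e(H')\gtrsim m^{8/5}$ with a fixed constant does not contradict $e(H')\lesssim_K m^{8/5}$.
\end{enumerate}
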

\begin{thm}[Janzer-Sudakov \cite{JS24}]
\label{JStheorem}
For any $k\geq 4$, we have
\[
\extr(n,Q_k)\lesssim n^{2-\frac{1}{k-1}+\frac{1}{(k-1)2^{k-1}}}.
\]
\end{thm}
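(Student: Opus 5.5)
This bound is the main theorem of Janzer and Sudakov \cite{JS24}, cited here as a black box; a self-contained proof is a substantial piece of extremal combinatorics. If I had to prove it, I would use dependent random choice combined with a supersaturation/regularization step. Both exploit the product structure $Q_k = Q_{k-1}\times K_2$.

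\emph{Step 1: regularize the host graph.} Let $H$ be an $n$-vertex graph with $e(H)\ge C n^{2-\beta}$, where
\[
\beta=\frac{1}{k-1}-\frac{1}{(k-1)2^{k-1}}.
\]
Using the Erd\H{o}s--Simonovits regularization lemma, I would pass to a subgraph $H'$ on $m$ vertices that is $K$-almost-regular, meaning every degree is within a constant factor of $\Delta\approx m^{1-\beta}$. This is standard and costs only constants. Regularity is what makes the later counting (via Cauchy--Schwarz) tight.

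\emph{Step 2: build the cube as two copies of $Q_{k-1}$.} View $Q_k$ as two copies of $Q_{k-1}$ joined by a perfect matching.
\begin{itemize}
\item Apply dependent random choice to obtain a large set $U$ in which every $k-1$ vertices, or more precisely every small subset, have many common neighbours.
\item Embed a copy of $Q_{k-1}$ greedily: the odd side of the bipartition has vertices of degree $k-1$, so the even-side vertices are chosen in $U$ first, and each odd-side vertex is then picked in the common neighbourhood of its $k-1$ neighbours.
\item To obtain $Q_k$, we need two copies $\phi,\psi$ of $Q_{k-1}$ with $\phi(v)\sim\psi(v)$ for every $v$. I would count pairs of vertex-disjoint homomorphic copies through a weighted auxiliary graph. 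Its vertices are the edges $(x,y)$ of $H'$, and two such edges are joined when they form part of a $C_4$ pattern consistent with the cube structure. In this graph a $Q_{k-1}$-type configuration corresponds to a $Q_k$ in $H'$.
\end{itemize}
The gain of $\frac{1}{(k-1)2^{k-1}}$ over the Alon--Krivelevich--Sudakov exponent $1/(k-1)$ should come from this iterated structure. Each level of the recursion squares the density, via Cauchy--Schwarz on the number of homomorphic copies, which produces the $2^{k-1}$ in the denominator.

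\emph{Step 3: remove degenerate copies.} The count of homomorphic copies must exceed the number of degenerate ones, i.e.\ those in which two cube vertices coincide. Degenerate copies factor through proper quotients of $Q_k$. I would bound them using almost-regularity, since each coincidence loses a factor of about $\Delta$ relative to the main term.

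\emph{Main obstacle.} The heart of the argument is the quantitative balancing in Step 2. One must show that the auxiliary ``edge graph'' is dense enough that the homomorphism count of $Q_{k-1}$-type configurations beats the degenerate count exactly at the exponent stated in Theorem \ref{JStheorem}. I would try first to set up a Sidorenko-type lower bound for homomorphism counts of $Q_{k-1}$ in the weighted auxiliary graph, then induct on $k$ starting from $Q_2=C_4$. I would not expect to reproduce the precise exponent without following \cite{JS24} closely.

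For the purposes of this paper, none of this needs to be carried out: Theorem \ref{JStheorem} is simply invoked, alongside Theorem \ref{EStheorem}, and fed into Theorem \ref{ADmain} and Theorem \ref{FFmain}. For $k\ge4$ this gives
\[
\alpha=\frac{1}{k-1}-\frac{1}{(k-1)2^{k-1}},
\]
so the dimensional threshold becomes $s>\max\left(\frac{d+1}{2},\frac{(k-1)2^{k-1}}{2^{k-1}-1}\right)$.
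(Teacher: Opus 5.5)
Your proposal agrees with the paper, which gives no proof of this statement and simply imports it from \cite{JS24}. Your value $\al=\frac{1}{k-1}-\frac{1}{(k-1)2^{k-1}}$ and the resulting threshold $\frac{(k-1)2^{k-1}}{2^{k-1}-1}$ match the proof of Corollary \ref{hypercubeADcorollary}(b). Steps 1--3 are only a heuristic outline, since the step meant to produce the $2^{k-1}$ is never established, so, as in the paper, everything rests on the citation.
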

\begin{cor}
\label{hypercubeADcorollary}
Let $d\geq 3$, and let $E\subset \R^d$ be an Ahlfors-David regular set of exponent $s$.  
\begin{enumerate}[(a)]
\item If $s>\max(\frac{5}{2},\frac{d+1}{2})$, then $\De_{Q_3}(E)$ has non-empty interior.
\item If $k\geq 4$ and
\[
s>\max\left(\frac{2^{k-1}}{2^{k-1}-1}(k-1),\frac{d+1}{2}\right),
\]
then $\De_{Q_k}(E)$ has non-empty interior.
\end{enumerate}

\end{cor}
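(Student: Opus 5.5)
This corollary follows directly from Theorem \ref{ADmain}: we read off the power-improvement exponent $\al$ from Theorems \ref{EStheorem} and \ref{JStheorem} and check that $\frac{1}{\al}$ equals the stated threshold.

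For part (a), Theorem \ref{EStheorem} gives $\extr(n,Q_3)\lesssim n^{8/5}=n^{2-\frac{2}{5}}$. So the hypothesis of Theorem \ref{ADmain} holds with $\al=\frac{2}{5}$, and $\frac{1}{\al}=\frac{5}{2}$. The assumption $s>\max(\frac{5}{2},\frac{d+1}{2})$ is then exactly the condition $s>\max(\frac{d+1}{2},\frac{1}{\al})$. Theorem \ref{ADmain} therefore shows that $\De_{Q_3}(E)$ has non-empty interior.

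For part (b), Theorem \ref{JStheorem} gives $\extr(n,Q_k)\lesssim n^{2-\al}$ with
\[
\al=\frac{1}{k-1}-\frac{1}{(k-1)2^{k-1}}=\frac{2^{k-1}-1}{(k-1)2^{k-1}}.
\]
This is positive for $k\geq 4$, and its reciprocal is
\[
\frac{1}{\al}=\frac{2^{k-1}}{2^{k-1}-1}(k-1),
\]
which is precisely the first term in the maximum. Theorem \ref{ADmain} then again gives the conclusion.

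The only real work is the arithmetic check that $\al>0$ and that $\frac{1}{\al}$ matches the stated exponents. The implicit constants in the extremal bounds depend only on $k$, which Theorem \ref{ADmain} permits. The assumption $d\geq 3$ is not used in the argument; it only makes the range of $s$ meaningful, since $s\leq d$ is required.
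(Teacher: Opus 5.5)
Your proposal is correct and is the same argument as the paper's: read off $\al=\frac{2}{5}$ from Theorem \ref{EStheorem} and $\al=\frac{2^{k-1}-1}{2^{k-1}(k-1)}$ from Theorem \ref{JStheorem}, check that $\frac{1}{\al}$ matches the stated thresholds, and invoke Theorem \ref{ADmain}. The one point that neither you nor the paper spells out is that Theorem \ref{ADmain} assumes $E$ compact; this is harmless, since the $s$-regularity condition forces $E$ to be bounded, and the paper takes $E=\textup{supp}\,\mu$, which is closed.
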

\begin{proof}
To prove (a), Theorem \ref{EStheorem} is equivalent to the bound
\[
\extr(n,Q_3)\lesssim n^{2-\frac{2}{5}}.
\]
Applying Theorem \ref{ADmain} with $\al=\frac{2}{5}$, we get the result.  To prove (b), we use Theorem \ref{JStheorem} in place of Theorem \ref{EStheorem} to get $\extr(n,Q_k)\lesssim n^{2-\al}$ with
\[
\al=\frac{2^{k-1}-1}{2^{k-1}(k-1)}.
\]
\end{proof}

\begin{cor}
Let $d\geq 3$.  For each $k\geq 3$, there is a constant $c_k$ such that for any $E\subset\F_q^d$, the following hold.
\begin{enumerate}[(a)]
\item If $|E|>c_3q^{\max(\frac{5}{2},\frac{d+1}{2})}$, then $\De_{Q_3}(E)\supset \F_q\setminus\{0\}$.
\item If $k\geq 4$ and $|E|>c_kq^s$, where
\[
s=\max\left(\frac{2^{k-1}}{2^{k-1}-1}(k-1),\frac{d+1}{2}\right),
\]
then $\De_{Q_k}(E)\supset \F_q\setminus\{0\}$.
\end{enumerate}
\end{cor}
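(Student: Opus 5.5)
This is the finite field analogue of Corollary \ref{hypercubeADcorollary}, so I plan to prove it exactly as that corollary was proved, using Theorem \ref{FFmain} in place of Theorem \ref{ADmain}. The only inputs are the extremal bounds for $Q_k$: Theorem \ref{EStheorem} when $k=3$, and Theorem \ref{JStheorem} when $k\geq 4$. Each has to be rewritten in the form $\extr(n,Q_k)\lesssim n^{2-\al}$ with an explicit $\al>0$, and then the exponent $\max(\frac{d+1}{2},\frac{1}{\al})$ from Theorem \ref{FFmain} has to be checked against the exponent stated in the corollary.

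For part (a), Theorem \ref{EStheorem} gives $\extr(n,Q_3)\lesssim n^{8/5}=n^{2-\frac{2}{5}}$. So Theorem \ref{FFmain} applies with $\al=\frac25$, and then $\frac{1}{\al}=\frac52$. Theorem \ref{FFmain} provides a constant $c$, depending only on $Q_3$ and $\al$, such that $|E|\geq c\,q^{\max(\frac{d+1}{2},\frac52)}$ implies $\De_{Q_3}(E)\supset\F_q\setminus\{0\}$. Taking $c_3=c$ gives (a); the strict inequality in the hypothesis only makes it stronger than what is needed.

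For part (b), with $k\geq 4$, Theorem \ref{JStheorem} gives the exponent
\[
2-\frac{1}{k-1}+\frac{1}{(k-1)2^{k-1}}=2-\frac{2^{k-1}-1}{(k-1)2^{k-1}},
\]
so Theorem \ref{FFmain} applies with $\al=\frac{2^{k-1}-1}{(k-1)2^{k-1}}$. This $\al$ is positive, because $2^{k-1}>1$. Its reciprocal is $\frac{1}{\al}=\frac{2^{k-1}}{2^{k-1}-1}(k-1)$, which is exactly the first entry of the maximum defining $s$. Theorem \ref{FFmain} therefore supplies a constant, which I take as $c_k$, and the conclusion follows. The constants $c_k$ depend only on $k$, since $Q_k$ and $\al$ are determined by $k$.

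I do not expect a real obstacle. The only step needing care is the algebraic rewriting of the Janzer--Sudakov exponent and the check that $\al>0$. The hypothesis $d\geq 3$ is not used in the argument; it only marks the range of dimensions in which the statement is meaningful.
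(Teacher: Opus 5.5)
Your proposal is correct and matches the paper's proof: the paper also reruns the argument of Corollary \ref{hypercubeADcorollary} with Theorem \ref{FFmain} in place of Theorem \ref{ADmain}. It takes $\al=\frac{2}{5}$ from Theorem \ref{EStheorem} and $\al=\frac{2^{k-1}-1}{2^{k-1}(k-1)}$ from Theorem \ref{JStheorem}; your explicit checks of the exponent algebra and of $\al>0$ fill in details the paper leaves implicit.
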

\begin{proof}
The proof is the same as the proof of Corollary \ref{hypercubeADcorollary}, except we use Theorem \ref{FFmain} in place of Theorem \ref{ADmain}.
\end{proof}

\subsection{Connections to VC-dimension}
VC-dimension is a notion arising in statistical learning theory, where it is used to understand the amount of training data needed to accurately solve a learning problem.  Recently, it has been studied in connection to point configuration problems in both the finite field setting \cite{Small1,FIMW,IMS23,MSW25} and the continuous setting \cite{IMMM25}.  The definition is as follows.

\begin{dfn}
\label{VCdefinition}
Let $X$ be a set, and let $\cH$ be a collection of functions $X\to \{0,1\}$.  We say that $\cH$ \textbf{shatters} a finite set $F=\{x_1,\dots,x_k\}\subset X$ if for each set of indices $I\subset\{1,\dots,k\}$, there exists a function $h_I\in\cH$ satisfying
\[
h_I(x_i)=\chi_I(i),
\]
where $\chi_I$ denotes the characteristic function of the set $I$.  The \textbf{VC-dimension} of $\cH$ is the largest number $k$ such that $\cH$ shatters some set of $k$ points (if no such number exists, the VC-dimension is said to be infinite).
\end{dfn}
The relation $h_I(x_i)=\chi_I(i)$ in Definition \ref{VCdefinition} can be naturally phrased in the language of graph theory.
\begin{dfn}
For each $k\in\N$, define the \textbf{$k$-shattering graph} $S_k$ to be the bipartite graph with parts $\{1,\dots,k\}$ and $\cP(\{1,\dots,k\})$ (here, $\cP(\cdot)$ denotes the power set), with adjacency given by the element relation (i.e., $i\sim I$ if and only if $i\in I$).
\end{dfn}

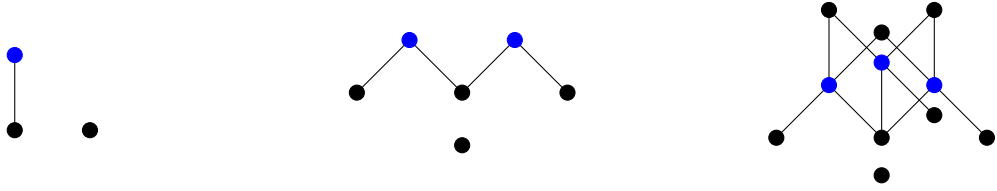
\begin{figure}
\centering
\begin{minipage}{.33\textwidth}
  \centering
\begin{tikzpicture}
\draw(0,0)--(0,1);
\draw[fill](0,0)circle[radius=0.1];
\draw[fill, blue](0,1)circle[radius=0.1];
\draw[fill](1,0)circle[radius=0.1];
\end{tikzpicture} 
\end{minipage}%
\begin{minipage}{.33\textwidth}
  \centering
\begin{tikzpicture}
\draw(-1.4,0)--(-.7,.7)--(0,0)--(.7,.7)--(1.4,0);
\draw[fill](0,0)circle[radius=0.1];
\draw[fill](-1.4,0)circle[radius=0.1];
\draw[fill](1.4,0)circle[radius=0.1];
\draw[fill, blue](.7,.7)circle[radius=0.1];
\draw[fill, blue](-.7,.7)circle[radius=0.1];
\draw[fill](0,-.7)circle[radius=0.1];
\end{tikzpicture}
\end{minipage}
\begin{minipage}{.33\textwidth}
  \centering
\begin{tikzpicture}
\draw(0,0)--(.7,.7);
\draw(0,0)--(0,1);
\draw(0,0)--(-.7,.7);
\draw(.7,.7)--(.7,1.7)--(0,1);
\draw(.7,.7)--(0,1.4)--(-.7,.7);
\draw(-.7,.7)--(-.7,1.7)--(0,1);
\draw(-.7,.7)--(-1.4,0);
\draw(.7,.7)--(1.4,0);
\draw(0,1)--(.7,.3);

\draw[fill](0,0)circle[radius=0.1];

\draw[fill](.7,1.7)circle[radius=0.1];
\draw[fill](-.7,1.7)circle[radius=0.1];
\draw[fill](0,1.4)circle[radius=0.1];

\draw[fill](-1.4,0)circle[radius=0.1];
\draw[fill](1.4,0)circle[radius=0.1];
\draw[fill](.7,.3)circle[radius=0.1];

\draw[fill, blue](.7,.7)circle[radius=0.1];
\draw[fill, blue](-.7,.7)circle[radius=0.1];
\draw[fill, blue](0,1)circle[radius=0.1];

\draw[fill](0,-.5)circle[radius=0.1];
\end{tikzpicture}
\end{minipage}
\caption{The graphs $S_1,S_2,S_3$.}
  \label{shattering}
\end{figure}

The shattering graphs for $k=1,2,3$ are shown in Figure \ref{shattering} (the vertices $\{1,\dots,k\}$ are shown in blue, and the sets $I\subset \{1,\dots,k\}$ are shown in black).  We consider the following family of indicator functions in our setting.
\begin{dfn}
Given $E\subset \R^d$ (respectively, $E\subset \F_q^d$) and $t>0$ (respectively, $t\in\F_q\setminus\{0\}$), let $\cH_t(E)$ denote the set of indicator functions of spheres of radius $t$ centered at points of $E$.
\end{dfn}
It is not hard to show that the VC-dimension of $\cH_t(\R^d)$ or $\cH_t(\F_q^d)$ is $d+1$ for any $t$.  Therefore, a natural question to ask is whether $\cH_t(E)$ must have VC-dimension $d+1$ provided $E$ is sufficiently large in the appropriate sense.  The problem was introduced in the finite field setting by Fitzpatrick, Iosevich, B. McDonald and Wyman \cite{FIMW}, who showed that if $E\subset \F_q^2$ is a set of size $|E|>cq^{15/8}$, then the VC-dimension of $\cH_t(E)$ is 3 for all $t\neq 0$.  In the continuous setting, the problem was studied in joint work by the author and Magyar, Iosevich, and B. McDonald \cite{IMMM25}, who show that when $d\geq 3$, there is a dimensional threshold $s<d$ which guarantees that the VC-dimension of $\cH_t(E)$ is at least 3 for an interval of $t$.  

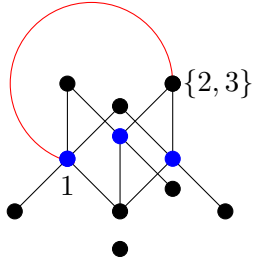
\begin{figure}
\centering
\begin{tikzpicture}
\draw(0,0)--(.7,.7);
\draw(0,0)--(0,1);
\draw(0,0)--(-.7,.7);
\draw(.7,.7)--(.7,1.7)--(0,1);
\draw(.7,.7)--(0,1.4)--(-.7,.7);
\draw(-.7,.7)--(-.7,1.7)--(0,1);
\draw(-.7,.7)--(-1.4,0);
\draw(.7,.7)--(1.4,0);
\draw(0,1)--(.7,.3);

\draw[fill](0,0)circle[radius=0.1];

\draw[fill](.7,1.7)circle[radius=0.1];
\draw[fill](-.7,1.7)circle[radius=0.1];
\draw[fill](0,1.4)circle[radius=0.1];

\draw[fill](-1.4,0)circle[radius=0.1];
\draw[fill](1.4,0)circle[radius=0.1];
\draw[fill](.7,.3)circle[radius=0.1];

\draw[fill, blue](.7,.7)circle[radius=0.1];
\draw[fill, blue](-.7,.7)circle[radius=0.1];
\draw[fill, blue](0,1)circle[radius=0.1];

\draw[fill](0,-.5)circle[radius=0.1];

\draw[red] (.7,1.7) arc [start angle=00, end angle=258, radius=1.08] ;
\draw[fill](.7,1.7)circle[radius=0.1];
\draw[fill, blue](-.7,.7)circle[radius=0.1];

\node[below] at (-.7,.6) {$1$};
\node[right] at (.7,1.7) {$\{2,3\}$};

\end{tikzpicture}
\caption{$S_3$ with an extra edge (red).}
  \label{badshattering}
\end{figure}

The results \cite{FIMW, IMMM25} are both approached by viewing the problem as a point configuration problem, where one must find a copy of $S_k$ in the distance graph.  However, the problem differs from other point configuration problems we have considered, in that one needs to have $S_k$ as not just a subgraph but an \textit{induced} subgraph.  If the distance graph contains a copy of $S_3$ (for instance), but the points representing the index $1$ and the set of indices $\{2,3\}$ happen to have the given distance $t$, then the copy of $S_3$ in the distance graph has an extra edge (Figure \ref{badshattering}).  In Figure \ref{badshattering}, the black points do not really shatter the blue points; instead, we have represented the function $(1,2,3)\mapsto (1,1,1)$ twice, and we have not represented the function $(1,2,3)\mapsto (0,1,1)$ at all.  Since the techniques in our present paper are not designed to handle this problem, we do not get new results on VC-dimension.  However, we can apply our methods to show that the graph $S_k$ appears in the distance graph.

\begin{cor}
\label{VCADcor}
Let $d\geq 3$ and let $k<d$.  Let $E\subset \R^d$ be a compact, Ahlfors-David regular set of exponent $s>\max(\frac{d+1}{2},k)$.  Then, $\De_{S_k}(E)$ has non-empty interior.
\end{cor}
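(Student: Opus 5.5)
The plan is to deduce Corollary \ref{VCADcor} from Corollary \ref{aksADcorollary}. The idea is to choose the bipartition of $S_k$ so that the side whose maximum degree enters the hypothesis is as favorable as possible. The graph $S_k$ is bipartite with parts $\{1,\dots,k\}$ and $\cP(\{1,\dots,k\})$. In the part $\cP(\{1,\dots,k\})$, each vertex $I$ is adjacent exactly to the indices $i\in I$, so $\deg(I)=|I|\le k$, with equality for $I=\{1,\dots,k\}$. By contrast, each index $i$ has degree $2^{k-1}$, which is much worse. So I will take the part $X$ in Corollary \ref{aksADcorollary} to be $\cP(\{1,\dots,k\})$, which gives $r=\max_{I}|I|=k$.

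One small point needs care: the vertex $\emptyset$ is isolated in $S_k$. This does no harm. Theorem \ref{AKStheorem} applies to any bipartite graph, isolated vertices included, since adding an isolated vertex changes the extremal number by at most a bounded amount. Alternatively, I can note that Lemma \ref{approximatetoexact} and the argument in the proof of Theorem \ref{ADmain} already produce configurations with distinct vertices, and an isolated vertex can be placed at any other point of $E$. Since $E$ is $s$-regular with $s>0$, it is infinite, so such a point is available. The simplest route is just to apply Theorem \ref{AKStheorem} as stated, with $r=k$, to obtain $\extr(n,S_k)\lesssim n^{2-1/k}$.

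With $r=k$ and the hypothesis $s>\max(\frac{d+1}{2},k)$, Corollary \ref{aksADcorollary} applies directly and shows that $\De_{S_k}(E)$ has non-empty interior. The assumptions $d\ge 3$ and $k<d$ are not needed for this deduction. They only keep the statement in the meaningful range: $S_k$ is the shattering graph relevant to VC-dimension at most $d+1$, and the condition $k<d$ is what allows the threshold $\max(\frac{d+1}{2},k)$ to be less than $d$.

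I expect no real obstacle here. The only step requiring thought is choosing the right side of the bipartition, together with the trivial remark about the isolated vertex $\emptyset$.
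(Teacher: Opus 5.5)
Your proposal is correct and matches the paper's proof: apply Corollary \ref{aksADcorollary} with $X=\cP(\{1,\dots,k\})$, so that $r=k$. Your computation $\deg(I)=|I|\le k$ is more accurate than the paper's phrasing that every such vertex has degree $k$. Your remark that the isolated vertex $\emptyset$ is harmless is also correct.
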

\begin{proof}
The graph $S_k$ is bipartite, with one of the independent sets having size $k$.  Therefore, every vertex in the other independent set has degree $k$.  By Corollary \ref{aksADcorollary}, the result follows.
\end{proof}

\begin{cor}
Let $d\geq 3$.  For each $k$, there exists a constant $c_k$ with the following property.  Let $k<d$, and let $E\subset \F_q^d$ be a set satisfying $|E|>c_kq^{\max(\frac{d+1}{2},k)}$.  Then, $\De_{S_k}(E)\supset \F_q\setminus \{0\}$.
\end{cor}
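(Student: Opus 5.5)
This corollary is the finite-field counterpart of Corollary \ref{VCADcor}, and the plan is to prove it in the same way. We apply Corollary \ref{aksFFcorollary} to the shattering graph $S_k$, taking the part $\{1,\dots,k\}$ as $Y$ and the part $\cP(\{1,\dots,k\})$ as $X$. Then every vertex of $X$, which is a set $I\subset\{1,\dots,k\}$, has degree $|I|\le k$, so
\[
r=\max_{I\in X}\deg(I)=k.
\]

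If the isolated vertex $I=\emp$ causes any concern, we can handle it directly: $\extr(n,G)$ is insensitive to isolated vertices once $n$ is larger than $|V(G)|$. Alternatively, Theorem \ref{AKStheorem} applies to the graph as it stands, since an isolated vertex only lowers the maximum degree and so never increases $r$.

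Corollary \ref{aksFFcorollary} then supplies a constant $c$, depending on $S_k$ and $r=k$ and hence only on $k$. We set $c_k$ equal to this $c$. If $|E|>c_k q^{\max(\frac{d+1}{2},k)}$, the hypothesis $|E|\ge c\,q^{\max(\frac{d+1}{2},r)}$ of that corollary holds, and it gives $\De_{S_k}(E)\supset\F_q\setminus\{0\}$.

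The hypotheses $d\ge 3$ and $k<d$ play no role in the argument. They only indicate when the conclusion is interesting, for instance when the exponent $\max(\frac{d+1}{2},k)$ is below $d$ so that the hypothesis on $|E|$ can actually be satisfied. I do not expect any real obstacle. The only point that needs care is confirming that $c_k$ depends on nothing but $k$, and this holds because $S_k$ itself is determined by $k$.
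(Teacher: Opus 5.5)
Your proposal is correct and is essentially the paper's proof: the paper also applies Corollary \ref{aksFFcorollary} to $S_k$, taking the power-set side as $X$ so that $r=k$. Your degree count $\deg(I)=|I|\le k$ (maximum $k$, with the isolated vertex $\emp$ harmless) is slightly more accurate than the paper's wording, which says every vertex on that side has degree exactly $k$.
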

\begin{proof}
The proof is the same as the proof of Corollary \ref{VCADcor}, except we use Corollary \ref{aksFFcorollary} in place of Corollary \ref{aksADcorollary}.
\end{proof}

\bibliographystyle{plain}
\bibliography{refsCycles}

\end{document}